\definecolor{wine-stain}{rgb}{0.5,0,0}
\newcommand{\red}[1]{\textcolor{red}{#1}}
\newtheorem{thm}{Theorem}[section]
\newtheorem{prop}[thm]{Proposition}
\newtheorem{lem}[thm]{Lemma}
\theoremstyle{definition}
\newtheorem{defn}[thm]{Definition}
\newtheorem{rem}[thm]{Remark}
\newtheorem{claim}{Claim}
\newtheorem*{ques*}{Question}
\newtheorem*{thm*}{Theorem}
\newtheorem*{rem*}{Remark}
\newtheorem*{rems*}{Remarks}
\newtheorem*{exs*}{Examples}
\newtheorem*{mthm*}{Main Theorem}
\numberwithin{equation}{section}
\newcommand{\al}{\alpha}
\newcommand{\be}{\beta}
\newcommand{\ga}{\gamma}
\newcommand{\del}{\delta}
\newcommand{\la}{\lambda}
\newcommand{\om}{\omega}
\newcommand{\vp}{\varphi}
\newcommand{\tpsi}{\tilde\psi}
\newcommand{\htheta}{\hat\theta}
\renewcommand{\d}{\partial}
\newcommand{\dbar}{\overline{\partial}}
\newcommand{\ddbar}{\sqrt{-1}\d\overline{\d}}
\newcommand{\ii}{\sqrt{-1}}
\newcommand{\NN}{\mathbb{N}}
\newcommand{\RR}{\mathbb{R}}
\newcommand{\CC}{\mathbb{C}}
\newcommand{\PP}{\mathbb{P}}
\newcommand{\sR}{\mathcal{R}}
\newcommand{\sS}{\mathcal{S}}
\newcommand{\im}{\mathrm{Im}}
\newcommand{\re}{\mathrm{Re}}
\newcommand{\Bl}{\mathrm{Bl}}
\newcommand{\arccot}{\mathrm{arccot}}
\renewcommand*{\eqref}[1]{%
	\hyperref[{#1}]{\textup{\tagform@{\ref*{#1}}}}%
}
\title[Singularity formation of cotangent flow on $\text{Bl}_{x_0}\mathbb{C}\mathbb{P}^{3}$]{Singularity formation in co-dimension one of the dHYM cotangent flow on blow up of $\mathbb{C}\mathbb{P}^{3}$ at a point}
\author[Ramesh Mete]{Ramesh Mete}
\address{Department of Mathematics, Indian Institute of Technology Bombay, Powai, Mumbai - 400076, Maharashtra, India.}
\email{rameshm@math.iitb.ac.in, ramesh2025m@gmail.com}
\subjclass[2020]{Primary 32Q15; Secondary 53C55, 58J35, 35A21}
\keywords{dHYM equation, dHYM instability, cotangent flow, blow up of complex projective space at a point, Calabi ansatz}
\thanks{Work supported in part by a PhD fellowship from the Indian Institute of Science, Bengaluru}
\begin{document}
	
\begin{abstract}
The existence and uniqueness of canonical singular solutions of the J equation and the deformed Hermitian Yang Mills (dHYM) equation was proved in \cite{DMS24} on compact K\"ahler surfaces. In this paper, we study the singularity formation of the dHYM cotangent flow on the one-point blow up of $\CC\PP^3$ using Calabi ansatz. In particular, we provide an explicit example where the flow develops a singularity along the exceptional divisor. Moreover, the limit satisfies corresponding singular dHYM equation in the sense of \cite{DMS24} and provides some evidence for Conjecture $1.12$ in \cite{DMS24} on this three dimensional manifold with symmetry.
\end{abstract}
	
\maketitle
	
%\tableofcontents
	
%%%%%%%%%%%%%%%%%%%%%%%%%%%%%%%%%%%%%
\section{\large Introduction}
\label{section:introduction to dHYM}

The deformed Hermitian Yang Mills equation is a special type of complex Hessian equation which first appeared in \cite{MMMG00, LYZ01} from different view-points. It has connections to mirror symmetry in string theory (see \cite{CXY18} more details). In \cite{JY}, Jacob-Yau initiated the systematic study of the equation from a mathematical view-point. Recently, it has received considerable attention - for instance, see \cite{JY, CJY, CXY18, gchen, DatPin2021, Pingali-dHYM 3 fold, Chu-Lee-Tak, JacSh, Bal23, FYZ, ChaJac23, DMS24} and references therein.

\vspace*{1mm}
Let $(X,\om)$ be a compact K\"ahler manifold of complex dimension $n$. Fix a real $(1,1)$ cohomology class $\al\in H^{1,1}(X,\RR):= H_{{\rm dR}}^{2}(X, \RR)\cap H_{\dbar}^{1,1}(X, \CC)$, and denote the K\"ahler class of $\om$ by $\be:=[\om]$. Then the {\em deformed Hermitian Yang Mills} (dHYM) equation is given by
\begin{equation}\label{eq:DHYM}
\re(\chi + \ii \om)^n = \cot(\hat\theta) \im(\chi + \ii \om)^n,
\end{equation}
where the desired solution $\chi$ is a smooth closed real $(1,1)$-form in $\al$, and $\htheta$ is a constant satisfying the following
\begin{equation}\label{eq:basic cohomological condition}
\re(\al+\ii\be)^n-(\cot\htheta)\im(\al+ \ii\be)^n=0.
\end{equation}
Here and for rest of the paper, $(\al+\ii\be)^n := \int_{X} (\chi + \ii \om)^n$ for any representatives $\om\in\be$, $\chi\in\al$, and it depends only on the cohomology classes $\al$ and $\be$.

\vspace*{1mm}
Assuming $(\al+\ii\be)^n\neq 0$, note that $\htheta$ is well-defined mod $2\pi$. We will assume throughout that $\htheta$ is normalized to be in the {\em supercritical phase}, i.e. $\htheta\in(0,\pi)$. In particular, we are also assuming that $\im(\al+\ii\be)^n>0$. We now recall a notion of slope stability.
	
\begin{defn}
\label{def:stability of the triple X alpha be}
Assume $\htheta\in(0,\pi)$ satisfying \eqref{eq:basic cohomological condition}. The triple $(X, \alpha, \beta)$ is said to be
\begin{enumerate}
\item {\em dHYM semi-stable} (resp. {\em stable}) if there exists a K\"ahler class $\gamma$ such that 
\begin{equation}\label{eq:numerical cond-dHYM sol-ChuLeeTaka-gen Kah mfd}
\big(\mathrm{Re}(\al+\ii\be)^k-\cot\htheta\cdot\mathrm{Im}(\al+\ii\be)^k\big)\cdot\ga^{m-k}\cdot Y\geq 0
\end{equation}
for any analytic subvariety $Y$ of dimension $m$ and any integer $k\in[1,m]$ (resp. strict inequality in \eqref{eq:numerical cond-dHYM sol-ChuLeeTaka-gen Kah mfd} when $m<n$).
			
\item {\em dHYM unstable} if it is not dHYM semi-stable.
\end{enumerate}
\end{defn}
	
In view of Definition \ref{def:stability of the triple X alpha be} and based on the results in \cite{gchen, Song2020}, Chu-Lee-Takahashi \cite{Chu-Lee-Tak} showed that there exists a (unique) solution of \eqref{eq:DHYM} with $\htheta\in(0,\pi)$ if and only if $(X,\al,\be)$ is dHYM stable. When $X$ is projective it is enough to check that
\begin{equation}\label{eq:numerical cond-dHYM-CJY conjecture}
\big(\mathrm{Re}(\al+\ii\be)^m-\cot\htheta\cdot\mathrm{Im}(\al+\ii\be)^m\big)\cdot Y > 0
\end{equation}
for any analytic subvariety $Y\subsetneq X$ with $0< m:=\dim_{\CC} Y < n$ (cf. \cite{Chu-Lee-Tak}). This second result confirms Conjecture 1.5 in \cite{CJY} in the projective case, which was also proved in \cite{Bal23} building on the works by Datar-Pingali \cite{DatPin2021}. Therefore, for projective manifolds we have that $(X,\al,\be)$ is dHYM stable (resp. dHYM semi-stable) if and only if the condition \eqref{eq:numerical cond-dHYM-CJY conjecture} holds (resp. the left hand side of \eqref{eq:numerical cond-dHYM-CJY conjecture} is non-negative) for any proper analytic subvariety $Y$ of $X$ with $0<m:=\dim_{\CC} Y<n$.

\vspace*{1mm}
If $\{\la_i\}_{i=1}^{n}$ are eigenvalues of $\om^{-1}\chi\in \text{End}(T^{1,0}X)$, then the dHYM equation \eqref{eq:DHYM} is equivalent to the following (cf. \cite{JY})
\begin{equation}
\label{eq:dHYM:PDE version}
\theta_{\om}(\chi) := \sum_{i=1}^{n}\arccot(\la_i) =\hat\theta,
\end{equation} 
where $\arccot(\la):=\pi/2 -\arctan(\la) \in (0,\pi)$ for any $\la\in\RR$. The map $\theta_\om$ is known as the {\em Lagrangian phase operator}. Let us fix a representative $\chi\in\al$. The (dHYM) {\em cotangent flow}, introduced in \cite{FYZ}, is defined by
\begin{equation}\label{eq:dHYM cotangent flow}
\begin{cases}
\frac{\d\vp}{\d t} =\cot \theta_\om(\chi_{\vp})-\cot\hat\theta, \\
\vp|_{t=0} = \vp_0,
\end{cases}
\end{equation}
where $\chi_\vp := \chi + \ddbar\vp$ for $\vp\equiv\vp(t)$ and $\hat\theta\in(0,\pi)$ satisfying \eqref{eq:basic cohomological condition}. They showed that the cotangent flow exists for all positive time if the initial function $\vp_0\in C^\infty(X,\RR)$ satisfies $\theta_\om(\chi_{\vp_0}) < \pi$ (cf. \cite[Theorem 1.2]{FYZ}).

	\vspace*{1mm}
	In this paper, we study the equation \eqref{eq:DHYM}  on $X=\Bl_{x_0}\PP^3$, the blow up of the complex projective space $\CC\PP^3$ at a point, in the dHYM unstable case. Our main result is the following.
	
	\begin{thm}\label{thm:singular-dhym-P3 blowup-cotangnet flow_Introduction}
		Let $X = \Bl_{x_0}\PP^3$ be the blow up of $\CC\PP^3$ at a point with a given K\"ahler class $\be:= b[H]-[E]$ and a real $(1,1)$ class $\al:= p[H]-q[E]$, where $b>1$ and both $p, q>0$. Fix a K\"ahler form $\om\in\be$ and a smooth representative $\chi\in\al$ both satisfying Calabi ansatz (see $\S$ \ref{subsec:Calabi-symm-reduction}). Let $\htheta\in(0,\pi)$ such that \eqref{eq:basic cohomological condition} holds, and we set $$c_q:= \cot\htheta$$ (see \eqref{eq:constant c_q for blow up of CP3 at a point} for the explicit value of $c_q$ in terms of $b,p,q$). Assume that there exists a smooth real $(1,1)$-form $\chi_0\in\al$ satisfying Calabi ansatz such that the corresponding Calabi potential $\psi_0$ (see $\S$ \ref{subsec:Calabi-symm-reduction}) satisfies the following:
		\begin{itemize}
			\item $\theta_{\psi_0}<\pi$, where the function $\theta_{\psi_0}:[1,b]\to (0,n\pi)$ is given by \eqref{eq:Lag phase func with Calabi ansatz on blow up of CPn at a point} with $n=3$ here and nothing but the Lagrangian phase $\theta_\om(\chi_0)$ defined above. Note that this condition implies the long-time existence of the dHYM cotangent flow \eqref{eq:dHYM cotangent flow} according to \cite[Theorem 1.2]{FYZ}.
			
			\item and the initial function $\psi_0$ satisfies the condition 
			\begin{equation}\tag{\textbf{H1}}
				\label{eq:comparison and monotonicity for initial function_Introduction}
				\psi_0 \leq \tpsi_\xi ~~\text{and}~~ (\cot \theta_{\psi_0})' > 0,
			\end{equation}
			where the function $\tilde\psi_{\xi}:[1,b]\longrightarrow\RR$ is defined in $\S$ \ref{section:aux family on P n blowup} with $\tilde\psi_{\xi}(1) = \xi$ and $\tilde\psi_{\xi}(b) = p$, and the constant $\xi\in [q, \xi')$ is the unique root of the polynomial $F$ given by \eqref{eq:poly-F-in-dim-3-explicit-form} with the constant $\xi'$ is defined in \eqref{eq:xi prime defn for blowup pf CP3 at a point}.
		\end{itemize}
		Then the cotangent flow \eqref{eq:dHYM cotangent flow} $\chi_t:= \chi + \ddbar\vp(t)$ (emanating from $\chi_0$) exists for all time and has the following convergence behaviour:
		\begin{enumerate}[~(1)]
			\item If $q > c_q + \sqrt{c_q^2 + 1}$, then the triple $(X,\al,\be)$ is dHYM stable and the cotangent flow $\chi_t$ converges smoothly on $X$ to the solution $\chi_\infty$ of the dHYM equation
			\begin{equation}\label{eq:DHYM-P3-blowup_Introduction}
				\re(\chi_\infty + \ii \om)^3 = c_q \im(\chi_\infty + \ii\om)^3.
			\end{equation}
			\vspace*{0.1mm}
			
			\item If $q = c_q + \sqrt{c_q^2 + 1}$, then $(X,\al,\be)$ is dHYM semistable but not stable. In this case, $\chi_t$ converges to $\chi_\infty$ (say) in the sense of $(1,1)$-currents on $X$, where $\chi_\infty$ has bounded local potential and the convergence is smooth on $X\setminus E$. Moreover, $\chi_\infty$ solves the equation \eqref{eq:DHYM-P3-blowup_Introduction} smoothly on $X\setminus E$, and it solves the same equation on $X$ but now the wedge product is taken in the sense of Bedford-Taylor \cite{BedTay87}. \vspace*{3mm}
			
			\item If $0< q < c_q + \sqrt{c_q^2 + 1}$, then $(X,\al,\be)$ is dHYM unstable. Here also, the cotangent flow $\chi_t$ converges to $\chi_\infty$ (say) in the sense of $(1,1)$-currents on $X$ (and smoothly on $X\setminus E$) but $\chi_\infty$ does not have bounded local potential. Instead, $\chi_\infty$ can be decomposed as $$\chi_\infty = \chi'_\infty + (\xi -q)[E],$$ where $\chi'_{\infty}\in p[H]-\xi[E]$ is a closed $(1,1)$-current with continuous local potential. Note that $\xi > q$ in the dHYM unstable case. Furthermore, $\chi'_\infty$ solves the following dHYM equation on $X\setminus E$
			\begin{equation*}
				\re(\chi'_\infty + \ii \om)^3 = \zeta \cdot \im(\chi'_\infty + \ii \om)^3,
			\end{equation*}
			where the new dHYM-slope $\zeta > c_{q}$ is given by
			\begin{align}\label{eq:new dHYM slope zeta for CP3 blowup at a point}
				\zeta := \sup \left\{ \frac{\re(\al_s + \ii \be)^3}{\im(\al_s + \ii \be)^3} : ~ \al_s= p[H]-s[E], ~~ s\in \sS_{3}^{+}\cap(0,\infty) \right\},
			\end{align}
			where the subset $\sS_{3}^{+}\subset\RR$ is defined in $\S$ \ref{section:aux family on P n blowup}.
		\end{enumerate}
	\end{thm}
	
	A (normalized) K\"ahler class on $X = \Bl_{x_0}\PP^n$ is of the form $\be=b[H]-[E]$ for $b>1$ and a general real $(1,1)$-cohomology class has the form $\al=p[H]-q[E]$ for any $p,q\in\RR$ (see $\S$ \ref{subsec:Calabi-symm-reduction}). We will see in Lemma \ref{lem:dHYM stab ineqs for CP3 blowup at a point} that the dHYM stability (resp. dHYM semi-stability) of the triple $(X,\al,\be)$ with $X=\Bl_{x_0}\PP^3$, $\al=p[H]-q[E]$ and $\be=b[H]-[E]$ is equivalent to 
	\begin{equation*}
		p > b\left(c_q + \sqrt{1+c_q^2}\right),~~ q > \left(c_q + \sqrt{1+c_q^2}\right)~~~(\text{resp.}~ \geq),
	\end{equation*}
	(see \eqref{eq:dim 3 dHYM stability-blow up P 3 at one point}). This is similar to the surface case $X=\Bl_{x_0}\PP^2$ where the dHYM stability is equivalent to the condition \eqref{eq:dim 2 dHYM stability-blow up P 3 at one point}, and the first inequality $p > b c_q$ in \eqref{eq:dim 2 dHYM stability-blow up P 3 at one point} always holds for the supercritical phase $\hat{\theta}\in(0,\pi)$ (cf. \cite{DMS24}). Analogously in Lemma \ref{lem:inequality involving p and b holds for blowup of P 3} we will see that the first inequality in \eqref{eq:dim 3 dHYM stability-blow up P 3 at one point} always holds on $X=\Bl_{x_0}\PP^3$ for the supercritical phase $\hat{\theta}\in(0,\pi)$ and for $p,q > 0$. In fact, if we further assume that $q \geq p/b$, the other inequality in \eqref{eq:dim 3 dHYM stability-blow up P 3 at one point} also holds and hence $(X, \al, \be)$ is dHYM stable (see Remark \ref{rem:only-possibility-of-dhym-unstab}). Therefore, in order to study the dHYM instability of $(X,\al,\be)$ (with $p,q > 0$) we must assume that $0 < q < p/b$, i.e. in particular, $\al$ is also a K\"ahler class.

	\begin{rem}
		As we will see in $\S$ \ref{sec:proof-main theorem-P3-blowup}, the key point is to find an initial data $\chi_0\in\al$ (or equivalently, an initial Calabi potential $\psi_0$) which satisfies certain properties to apply the dHYM cotangent flow method. For $X=\Bl_{x_0}\PP^3$, we have a choice of initial function $\psi_0$ given by \eqref{eq:initial function-dim 3} which satisfies all but the first condition of \eqref{eq:comparison and monotonicity for initial function_Introduction} mentioned in Theorem \ref{thm:singular-dhym-P3 blowup-cotangnet flow_Introduction} for arbitrary $b,p,q$ with $b>1$ and $0<q<p_{\star}$ (see $\S$ \ref{section:aux family on P n blowup} for the constant $p_{\star}$). Further, the same initial function $\psi_0$ given by \eqref{eq:initial function-dim 3} with particular values $p=2bq=18$ and $q=b=3$ also satisfies the first condition of \eqref{eq:comparison and monotonicity for initial function_Introduction} (see Lemma \ref{lem:initial-func-less-than-limit}). We also expect the same to be true for other values of $p,b,q$.
	\end{rem}
	
	\begin{rem}
		As we have discussed above the triple $(X,\al,\be)$ is dHYM unstable if and only if $0< q < c_q + \sqrt{1 + c_{q}^2}$. In $\S$ \ref{subsec:examples-dhym-unstab-types}, we divide this dHYM instability inequality into two parts -- (i) $c_q < q < c_q + \sqrt{1 + c_{q}^2}$, which we will call as ``{\em first kind of dHYM instability}", where dHYM instability occurs along the exceptional divisor $E$; and (ii) $0< q \leq c_q$, which we will call as ``{\em second kind of dHYM instability}", where the dHYM instability happens along co-dimension two subvarieties. The specific example $p=2bq=18$ with $q=b=3$ gives us an example of ``{\em first kind of dHYM instability}". It would be interesting to study the singularity formation of the flow for ``{\em second kind of dHYM instability}".
	\end{rem}
	
	\begin{rem}\label{rem:explicit values of c_q 4th deg poly and zeta}
		The constant $\zeta$ in Theorem \ref{thm:singular-dhym-P3 blowup-cotangnet flow_Introduction} is equal to $c_{\xi}$, where $\xi$ is the unique solution of the polynomial $F$ on $(q,\xi')$ (see $\S$ \ref{section:aux family on P n blowup}). Moreover, the constant $\zeta$ is the invariant version of the constant $\cot\theta_{\min}$ from Conjecture $1.12$ in \cite{DMS24} in the unstable case. We expect that the $(1,1)$-current $\chi_\infty$ in part $(3)$ of Theorem \ref{thm:singular-dhym-P3 blowup-cotangnet flow_Introduction}  solves the following ``singular" dHYM equation
		\begin{equation}
			\re\langle(\chi_\infty + \ii \om)^3\rangle = \zeta \cdot \im\langle(\chi_\infty + \ii \om)^3\rangle,
		\end{equation}
		on $X = \Bl_{x_0}\PP^3$, where $\langle ~, ~ \rangle$ denotes the non-pluripolar product (cf. \cite{BEGZ10}). Note that similar expectation is true for the surface case $X=\Bl_{x_0}\PP^2$ as proved in \cite{DMS24}. For the pair of K\"ahler classes $\be= 3[H]-[E]$ and $\al=18[H]-3[E]$ (i.e. $p=2bq=18$ and $q=b=3$) we have that $c_q:= \cot\htheta = \frac{5328}{2863} < 3=q$ and $\xi\approx 3.977$ is the unique root in the interval $(3,6)$ of the fourth degree polynomial 
		\begin{equation*}
			F(s) =  -s^4 + 2887 s^2 -10692 s - 2890,
		\end{equation*}
		and the new dHYM-slope $\zeta$ is given by $$ \zeta = c_{\xi} = \frac{5346-\xi^3 + 3\xi}{2890-3\xi^2} > c_{q}.$$
	\end{rem}

	\begin{rem}
		There is another geometric flow, called the {\em line bundle mean curvature flow} (LBMCF), in the dHYM literature which was introduced and studied in \cite{JY}. Recently Chan-Jacob \cite{ChaJac23} gave some interesting examples of both finite and infinite time singularity formation for LBMCF on $X=\Bl_{x_0}\PP^n$. Since the cotangent flow exists for {\em all time} when the initial data $\vp_0$ satisfies $\theta_{\om}(\chi_{\vp_0})<\pi$ \cite{FYZ} and this condition is {\em sharp} to define the cotangent flow, this rules out the finite time singularity phenomenon unlike the LBMCF. But as our main result and a result in \cite{DMS24} give evidence for infinite time singularity formation, it will be fascinating to find more examples of this types along higher co-dimensional subvarieties as well as on higher dimensional general compact K\"ahler manifolds. We want to explore it in future work.
	\end{rem}
	
	$\textbf{A brief outline of the paper:}$ In $\S$ \ref{sec:preliminaries}, we first recall the Calabi ansatz method for producing K\"ahler metrics on $X=\Bl_{x_0}\PP^n$ in terms of certain real-valued functions and then using this method we will see that the dHYM equation can be reduced to an ordinary differential equation (ODE). Next, we will show that the dHYM stability condition can be interpreted as finitely many basic inequalities involving the constants $p,q$ and $b$. In $\S$ \ref{section:aux family on P n blowup}, we describe the constants $\xi$ and $\zeta$, and the fourth degree polynomial $F$ mentioned in Theorem \ref{thm:singular-dhym-P3 blowup-cotangnet flow_Introduction} in general setting $X=\Bl_{x_0}\PP^n$, and in $\S$ \ref{subsec:examples-dhym-unstab-types}, we provide some examples of ``two kinds of dHYM instability" for $n=3$. In $\S$ \ref{sec:proof-main theorem-P3-blowup}, we mention the important properties to be satisfied by the initial function $\psi_0$ to guarantee the long-time existence and convergence of the cotangent flow, and then we prove the Theorem \ref{thm:singular-dhym-P3 blowup-cotangnet flow_Introduction}.

\vspace*{5mm}
%%%%%%%%%%%%%%%%%%%%%%%%%%%%%%%%%%%%
\section{\large Preliminaries}
\label{sec:preliminaries}
	
Let $X=\Bl_{x_0}\PP^n$ be the blow up of the complex projective space $\CC\PP^n$ at a point.

%%%%%%%%%%%%%%%%%%%%%%%%%%%%%%%%%%%%%%%%
\subsection{Calabi symmetry and reduction}
\label{subsec:Calabi-symm-reduction}
	
	Let $E$ denotes the exceptional divisor, and $H$ the pullback of the hyperplane divisor from $\PP^n$ on $X$. Then (cf. \cite{Laz-bk-v1, BHPV04-book})
	\[ H^{1,1}(X,\RR) = \left\{ p[H] - q[E] ~:~ p, q \in \RR\right\},\]
	where $[~]$ denotes the respective Poincar\'e dual. Moreover, a class $p[H] - q[E]$ is {\em K\"ahler} if and only if $p>q>0$; {\em numerically effective} ({\em nef}) if and only if $p\geq q\geq 0$; and {\em big} if and only if $p>\max(0,q)$. We have the following intersection formulae:
	\begin{equation}\label{eq:intersection formulae P^n blown up at one point}
		[H]^n=1,~~ [E]^n=(-1)^{n-1}, ~~[H]\cdot[E]=0.
	\end{equation}

	\vspace*{1mm}
	We choose any holomorphic coordinates $(z^1,\cdots,z^n)$ on the coordinate patch $X\setminus(H\cup E)\cong\CC^{n}\setminus\{0\}$ and set $\rho :=\ln(\lvert z \rvert^2)$. For any smooth function $u: \RR\to\RR$, $$\om := \ddbar u(\rho)$$ is a smooth real closed $(1,1)$-form $\CC^{n}\setminus\{0\}$ and it is a K\"ahler form if both $u', u''>0$. In order for $\om$ to extend to $X$ in a class $\Omega:=p[H]-q[E]$ (say), $u$ must satisfy the following boundary asymptotics: the functions $U_0, U_\infty : (0,\infty) \rightarrow \RR$ defined by $$U_0(r):= u(\ln r) - q\ln r~~ \text{and} ~~ U_\infty(r):=u(-\ln r) + p \ln r$$ must extend by continuity to smooth functions at $r=0$ (cf. \cite{Cal-extremal}). It implies $$\underset{\rho \to -\infty}{\lim} u'(\rho)= q~~ \text{and} ~~\underset{\rho\to\infty}{\lim} u'(\rho) = p.$$
	Additionally, if $p>q>0$ and both $U'_0(0)>0$, $U'_\infty(0)>0$, the extended form $\om$ is a K\"ahler form on $X$ in the K\"ahler class $\Omega$. Any closed real $(1,1)$-form on $X$ constructed in the above manner is said to have {\em Calabi ansatz} (or {\em symmetry}).

	\vspace*{1mm}
	We fix a (normalized) K\"ahler class $\beta = b[H]-[E]$, where $b>1$, and a real $(1,1)$-cohomology class $\alpha = p[H]-q[E]$, where $p,q\in \RR$. From \eqref{eq:intersection formulae P^n blown up at one point} we have $$(\al+\ii\be)^n = (p+ \ii b)^n - (q+ \ii)^n.$$ Since $\hat\theta\in(0,\pi)$, $\im(p+\ii b)^n > \im(q+\ii)^n$. Choose any K\"ahler form $\om\in\be$ and a real $(1,1)$ form $\chi\in\al$, and assume that both satisfy Calabi symmetry, i.e. $$\om=\ddbar u(\rho)~~ \text{and}~~ \chi=\ddbar v(\rho)$$ for some $u, v \in C^\infty(\RR,\RR)$ satisfying above asymptotic conditions, namely
	\begin{align*}
		\underset{\rho \to -\infty}{\lim} u'(\rho)= 1, ~~ \underset{\rho \to \infty}{\lim} u'(\rho)= b, ~~ \underset{\rho \to -\infty}{\lim} v'(\rho)= q, ~~ \underset{\rho \to \infty}{\lim} v'(\rho)= p.
	\end{align*}
	Restricting to $\CC^n\setminus\{0\}$, the eigenvalues of $\om^{-1}\chi$ are $\frac{v'}{u'}$ with multiplicity $(n-1)$, and $\frac{v''}{u''}$ with multiplicity one. Note that both $u',u''>0$. We set $x:=u'(\rho)\in(1,b)$. Then we let $\psi:[1,b]\to\RR$ be the smooth function defined by
	\begin{equation}
		\label{eq:function psi}
		\psi(u'(\rho)) = v'(\rho),~~\rho\in\RR,
	\end{equation}
	with $\psi(1)=q$ and $\psi(b)=p$. Differentiating with respect to $\rho$ one can see that the above eigenvalues are $\frac{\psi}{x}$ with multiplicity $n-1$ and $\psi'$ with multiplicity one.

	\vspace*{1mm}
	In above setting, the dHYM equation $\eqref{eq:DHYM}$ is reduced to the following exact ODE:
	\begin{equation*}
		\frac{d}{dx}(\text{Re}(\psi+ \ii x)^n -\cot\hat\theta\cdot\text{Im}(\psi+ \ii x)^n)=0.
	\end{equation*}
	Thus, as in \cite{JacSh}, solving $\eqref{eq:DHYM}$ on $X=\Bl_{x_0}\PP^n$ is equivalent to finding a smooth function $\psi:[1,b]\to\RR$ with $\psi(1)=q$, $\psi(b)=p$ which satisfies
	\begin{equation}\label{eq:DHYM ODE solution on P^n blown up at one point}
		\text{Re}(\psi+ \ii x)^n - \cot\hat\theta\cdot \text{Im}(\psi+ \ii x)^n = A_q,
	\end{equation}
	where the constants $c_q$ and $A_q$ are given by
	\begin{equation*}%\label{eq:constant A_0}
		\begin{split}
			c_q &:= \cot\hat\theta =\frac{\re(p+ \ii b)^n - \re(q+ \ii)^n}{\im(p+ \ii b)^n - \im(q+ \ii)^n}, \\
			A_q &:= \text{Re}(p + \ii b)^n - c_q \text{Im}(p + \ii b)^n \\ & ~ = \text{Re}(q + \ii)^n - c_q \text{Im}(q + \ii)^n.
		\end{split}
	\end{equation*}
	For the values $p=18$, $q=b=n=3$ we have $c_q=\frac{5328}{2863}$ as mentioned in Remark \ref{rem:explicit values of c_q 4th deg poly and zeta}.

	\vspace*{1mm}
	Now suppose for each $t\geq 0$, $\chi_t:= \chi + \ddbar\vp(t)$ satisfies Calabi ansatz, i.e.  $$\chi_t = \ddbar v(\rho,t)$$ for some $v(\cdot,t)\in C^\infty(\RR,\RR)$. Let $\psi: [1,b]\times [0,\infty) \rightarrow \RR$ be the function defined by
	\begin{equation}\label{function psi t}
		\psi(x,t) := \psi( u'(\rho), t) := v'(\rho, t), ~~\rho\in\RR,
	\end{equation}
	with boundary conditions $\psi(1,t)=q$ and $\psi(b,t)=p$ for any $t\geq 0$. Note that $\psi(\cdot,t)$ is smooth on $(1,b)$. Taking derivatives with respect to $\rho$ and $t$ respectively we obtain
	\begin{equation}\label{eq:cot-flow-reduction-1st-equation}
		\psi'(x,t) u''(\rho) = v''(\rho, t),~~~ \dot\psi(x,t) = \dot v'(\rho, t).
	\end{equation}
	Throughout the paper, we use the following notations $$\dot\psi:=\frac{\d\psi}{\d t}, ~~ \psi':=\frac{\d\psi}{\d x}, ~~ u':=\frac{\d u}{\d\rho}, ~~ u'':=\frac{\d^2 u}{\d \rho^2}.$$ 
	In above formulation, the Lagrangian phase function $\theta_{\om}(\chi_t)$ is given by \[ \theta(x,t):= (n-1)\arccot(\frac{\psi}{x}) + \arccot(\psi').\]
	We first re-write the cotangent flow \eqref{eq:dHYM cotangent flow} as \[\dot v(\rho, t) =  \cot\theta(x,t) - \cot\hat\theta.\]
	Then taking derivative of this equation with respect to $\rho$ and using \eqref{eq:cot-flow-reduction-1st-equation} we see that the cotangent flow \eqref{eq:dHYM cotangent flow} on $X=\Bl_{x_0}\PP^n$ is reduced into the following evolution equation of one space variable:
	\begin{equation}\label{eq:cotangent flow on P^n blowup at a point}
		\dot \psi(x,t) = Q(x)(\cot\theta)' = Q(x)(\csc^2\theta) \big( \frac{\psi''}{1+\psi'^2} + (n-1)\frac{x\psi' -\psi}{x^2 + \psi^2}\big) =: L_2(\psi).
	\end{equation}
	Here $Q:[1,b]\to\RR_{+}$ is defined by $Q(x):=u^{\prime\prime}\circ{u^{\prime}}^{-1}(x)$
	with $Q(1)=Q(b)=0$. Note that $Q$ is a continuous function which is smooth and positive in the interior. The equation \eqref{eq:cotangent flow on P^n blowup at a point} is strictly parabolic away from the boundary points $\{1,b\}$.

	\vspace*{1mm}
	We have the following important observation. 
	
	\begin{lem}\label{lem:supercritical equiv to positivity and monotonicity_blowup of P n}
		For any $\psi\in C^2([1,b],\RR)$ we define 
		\begin{equation}\label{eq:Lag phase func with Calabi ansatz on blow up of CPn at a point}
			\theta_\psi(x) := \arccot (\psi') + (n-1)\arccot(\frac{\psi}{x}). 
		\end{equation}
		The function $\theta_{\psi} < \pi$ if and only if the function $\mathrm{Im}(\psi+\ii x)^n$ is strictly increasing on $[1,b]$. Moreover, $\theta_{\psi} < \pi$ also implies the functions $\mathrm{Im}(\psi+\ii x)^k$ are strictly increasing and positive for all integers $k\in[1, n-1]$, and $\psi$ is positive when $n\geq3$. Furthermore, if $\theta_\psi \equiv \htheta$ for some constant $\htheta\in(0,\pi)$, then for each integer $k\in[1, n-1]$ and all $x\in[1,b]$ we have 
		$$ \mathrm{Re}(\psi + \ii x)^k - (\cot\htheta) \mathrm{Im}(\psi + \ii x)^k >0. $$
	\end{lem}
	
	\begin{proof}
		Observe that the condition $\theta_\psi < \pi$ implies $$0< k \cdot \arccot(\frac{\psi}{x}) < \arccot(\psi') + k \cdot \arccot(\frac{\psi}{x}) < \pi$$ for all integers $k\in[1, n-1]$. As a consequence, $\mathrm{Im}\left(\frac{\psi}{x}+\ii\right)^k>0,$ and so $\im(\psi+\ii x)^k>0$ for all integers $k\in[1,n-1]$. Also note that $\psi>0$ when $n\geq 3$ since if $\psi\leq 0$ at some point, then at that point we must have $\theta_\psi>(n-1)\frac{\pi}{2}\geq \pi$. Using the formula $\cot(kA) = \frac{\mathrm{Re}(\cot A +\ii)^k}{\mathrm{Im}(\cot A +\ii)^k}$, where $k\in\NN$ and $A\in(0,\frac{\pi}{k})$, we get
		\begin{align*}
			\cot(k\cdot\arccot\frac{\psi}{x}) > \cot(\pi-\arccot\psi') \iff  &\mathrm{Re}(\psi+\ii x)^k + \psi' \mathrm{Im}(\psi+\ii x)^k >0 \\
			\iff &\left(\mathrm{Im}(\psi+\ii x)^{k+1}\right)' >0.
		\end{align*}
		This proves the first part. For the second part, suppose $\theta_\psi\equiv \htheta$ for some constant $\htheta\in(0,\pi)$. Then for any integer $k\in[1, n-1]$ we get $$ 0< k \cdot \arccot(\frac{\psi}{x}) < \htheta < \pi,$$ and hence
		$$ \frac{\mathrm{Re}(\psi + \ii x)^k}{\mathrm{Im}(\psi + \ii x)^k} = \cot(k\cdot \arccot \frac{\psi}{x}) > \cot\htheta.$$ 
		We get the desired result since the denominator on the left hand side is positive.
	\end{proof}

\vspace*{3mm}
%%%%%%%%%%%%%%%%%%%%%%%%%%%%%%%%%%%%%%%%
\subsection{Stability Inequalities}
\label{subsec:stab-ineq}
	
Analogous to \cite[Lemma 1]{JacSh}, we have the following.
	
	\begin{lem}\label{lem:stability for the triple X alpha be on blow up of P n}
		Suppose $X=\Bl_{x_0}\PP^n$, $\al=p[H]-q[E]$ and $\be=b[H]-[E]$ as above with $\htheta\in(0,\pi)$. The triple $(X,\al,\be)$ is dHYM stable (resp. semi-stable) (in the sense of Definition \ref{def:stability of the triple X alpha be}) if and only if for all integers $k\in[1, n-1]$ we have
		\begin{equation}\label{eq:semi-stability-dHYM-blow up of P n}
			\begin{cases}
				\re(p+ \ii b)^k - (\cot\hat\theta) \im(p+ \ii b)^k > 0 ~~(\text{resp.}~\geq), \vspace*{1mm} \\  \re(q+ \ii)^k  - (\cot\hat\theta) \im(q+ \ii)^k >0 ~ ~~(\text{resp.}~\geq).
			\end{cases}
		\end{equation}
	\end{lem}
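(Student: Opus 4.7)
The approach is to combine the simplified projective form of the Chu--Lee--Takahashi criterion \eqref{eq:numerical cond-dHYM-CJY conjecture} with the particularly rigid cohomology ring of $X=\Bl_{x_0}\PP^n$. By \eqref{eq:numerical cond-dHYM-CJY conjecture}, the triple $(X,\al,\be)$ is dHYM stable (resp.\ semi-stable) if and only if
$$\Phi_m\cdot Y:=\bigl(\re(\al+\ii\be)^m-\cot\htheta\cdot\im(\al+\ii\be)^m\bigr)\cdot Y > 0\ (\text{resp.}~\ge 0)$$
for every proper analytic subvariety $Y\subsetneq X$ of dimension $m\in[1,n-1]$, so the lemma reduces to translating this into the explicit numerical conditions \eqref{eq:semi-stability-dHYM-blow up of P n}.

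The first step is a cohomological expansion. Because $[H]\cdot[E]=0$ in \eqref{eq:intersection formulae P^n blown up at one point}, every mixed monomial $[H]^j[E]^{m-j}$ with $0<j<m$ vanishes, so binomial expansion of $(\al+\ii\be)^m = ((p+\ii b)[H]-(q+\ii)[E])^m$ collapses to
$$(\al+\ii\be)^m=(p+\ii b)^m\,[H]^m+(-1)^m(q+\ii)^m\,[E]^m,$$
and hence
$$\Phi_m=L_1(m)\,[H]^m+(-1)^mL_2(m)\,[E]^m,$$
where $L_1(m):=\re(p+\ii b)^m-\cot\htheta\cdot\im(p+\ii b)^m$ and $L_2(m):=\re(q+\ii)^m-\cot\htheta\cdot\im(q+\ii)^m$ are the two quantities appearing in \eqref{eq:semi-stability-dHYM-blow up of P n}.

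For necessity, I would test $\Phi_m$ against two natural model subvarieties of dimension $m$. Taking $Y_1$ to be a generic $m$-dimensional linear subspace of $\PP^n$ (chosen to avoid $x_0$, hence disjoint from $E$) one reads off $[H]^m\cdot Y_1=1$ and $[E]^m\cdot Y_1=0$, so that $\Phi_m\cdot Y_1=L_1(m)$, forcing the first inequality. Taking $Y_2\cong\PP^m$ to be a linear subspace of the exceptional divisor $E\cong\PP^{n-1}$ and using the adjunction $\sO_X(E)|_E=\sO_{\PP^{n-1}}(-1)$ (equivalently, $[E]|_E=-h$ for $h$ the hyperplane class of $E$), a direct computation gives $[H]^m\cdot Y_2=0$ and $[E]^m\cdot Y_2=(-1)^m$, hence $\Phi_m\cdot Y_2=L_2(m)$, forcing the second.

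For sufficiency, assume both families of inequalities hold and let $Y$ be an arbitrary irreducible proper analytic subvariety of dimension $m$. Expanding $[Y]$ in the basis $\{[H]^{n-m},[E]^{n-m}\}$ of $H^{2(n-m)}(X,\RR)$ and pairing against $\Phi_m$ presents $\Phi_m\cdot Y$ as a combination of $L_1(m)$ and $L_2(m)$ whose coefficients are intersection numbers of $Y$ with powers of $[H]$ and $[E]$. I would then verify positivity of this combination by splitting into the cases $Y\subset E$ (controlled by the degree of $Y$ in $E\cong\PP^{n-1}$) and $Y\not\subset E$ (whose projection $\pi(Y)\subset\PP^n$ controls the $[H]$-degree, while the multiplicity of $\pi(Y)$ at $x_0$ controls the $[E]$-contribution via iteration of $[E]|_E=-h$). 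The main obstacle is carrying out this case analysis cleanly and confirming that the two model subvarieties $Y_1$ and $Y_2$ already witness all the effective intersection positivity needed on $X=\Bl_{x_0}\PP^n$; this is the higher-dimensional analogue of \cite[Lemma~1]{JacSh} and rests directly on the simplicity of the intersection numbers in \eqref{eq:intersection formulae P^n blown up at one point}.
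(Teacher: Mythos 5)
Your proposal is correct and follows essentially the same route as the paper: the paper's proof likewise reduces to the Chu--Lee--Takahashi criterion in the projective form \eqref{eq:numerical cond-dHYM-CJY conjecture} and tests against exactly the two classes $[H]^{n-m}$ and $(-1)^{n-m-1}[E]^{n-m}$, obtaining $(\al+\ii\be)^m\cdot H^{n-m}=(p+\ii b)^m$ and $(\al+\ii\be)^m\cdot(-1)^{n-m-1}E^{n-m}=(q+\ii)^m$ via \eqref{eq:intersection formulae P^n blown up at one point}. You actually supply more detail on the sufficiency direction (the expansion of an arbitrary effective class in $[H]^{n-m}$, $[E]^{n-m}$ and the case split $Y\subset E$ versus $Y\not\subset E$) than the paper, which simply asserts that these two subvarieties suffice, as in \cite[Lemma 1]{JacSh}.
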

	
	\begin{proof}
		It is enough to consider the following $m$-dimensional subvarieties $Y= H^{n-m}$ and $Y= (-1)^{n-m-1}E^{n-m}$ where $m\in[1,n-1]$ is an integer. Using \eqref{eq:intersection formulae P^n blown up at one point} we get
		$$ (\al+ \ii\be)^m \cdot H^{n-m} 
		= (p+ \ii b)^m,  ~~ (\al+ \ii\be)^m \cdot (-1)^{n-m-1}E^{n-m}  = (q+ \ii)^m. $$
		Therefore, we get the desired result.
	\end{proof}
	
	In view of the above lemma, the dHYM equation \eqref{eq:dHYM:PDE version} has a smooth solution if and only if stability inequalities \eqref{eq:semi-stability-dHYM-blow up of P n} are satisfied. In particular for $n=2$, the dHYM (resp. semi-) stability of the triple $(X,\al,\be)$ is equivalent to 
	\begin{equation}\label{eq:dim 2 dHYM stability-blow up P 3 at one point}
		p - b c_q >0,~~ q - c_q >0 ~~~(\text{resp.}~ \geq).
	\end{equation}
	On the other hand, for $n\geq 3$ we have the following elementary observation.
	
	\begin{lem}\label{lem:dHYM stab ineqs for CP3 blowup at a point}
		Suppose $n\geq 3$. Then the dHYM stability (resp. semi-stability) of the triple $(\Bl_{x_0}\PP^n, \al,\be)$ implies
		\begin{equation}\label{eq:dim 3 dHYM stability-blow up P 3 at one point}
			p > b\left(c_q + \sqrt{1+c_q^2}\right),~~ q > \left(c_q + \sqrt{1+c_q^2}\right)~~~(\text{resp.}~ \geq).
		\end{equation}
		In particular, both $p$, $q$ must be positive for dHYM semi-stability. Moreover, for $n=3$ the condition \eqref{eq:dim 3 dHYM stability-blow up P 3 at one point} with strict inequality (resp. with $\geq$) is also sufficient for dHYM stability (resp. semi-stability).  
	\end{lem}
	
	\begin{proof}
		Considering $k=1, 2$ in the inequalities \eqref{eq:semi-stability-dHYM-blow up of P n} we get
		\begin{equation*}
			p - b c_q >0,~~ q - c_q >0,~~ p^2-b^2-2bpc_q >0,~~q^2-1-2qc_q >0 ~~~(\text{resp.}~ \geq).
		\end{equation*}
		The last two inequalities implies $$|p-bc_q|>b\sqrt{1+c_q^2}, ~~|q-c_q|>\sqrt{1+c_q^2} ~~~(\text{resp.}~ \geq)$$
		and hence, all observations follow immediately.
	\end{proof}
	
	It is known \cite{DMS24} that the inequality $p>b c_q$ always holds under assumption $\al\cdot\be>0$ for $n=2$. We now show that one stability inequality also comes for free on $\Bl_{x_0}\PP^3$.
	
	\begin{lem}\label{lem:inequality involving p and b holds for blowup of P 3}
		Assume $\mathrm{Im}(\al+\ii\be)^3 >0$, i.e. $3(p^2b - q^2) > b^3 - 1$, and both $p,q>0$. Then $p > b (c_q + \sqrt{1+ c_q^2})$ where 
		\begin{equation}\label{eq:constant c_q for blow up of CP3 at a point}
			c_q = \cot\htheta = \frac{ p^3 - 3pb^2 -q^3 + 3q}{3p^2 b -b^3 -3q^2 +1}.
		\end{equation}
		Furthermore, for the case $0<p\leq q$ we also have $q > (c_q + \sqrt{1+ c_q^2})$, and hence $(\Bl_{x_0}\PP^3,\al,\be)$ is dHYM stable in this case (according to \eqref{eq:dim 3 dHYM stability-blow up P 3 at one point}).
	\end{lem}
	
	\begin{proof}
		Since both $p, q >0$, from the hypothesis we obtain $ 0 < q < p\sqrt{b}$. We compute 
		\begin{align*}
			G(p,q,b) &:=
			(p^2 - b^2 - 2pb c_q) \Big(\mathrm{Im}(p+\ii b)^3 - \mathrm{Im}(q+\ii)^3\Big) \\ &= \mathrm{Im}\Big((p-\ii b)^2((p+\ii b)^3-(q+\ii)^3)\Big) ~\hspace*{1.6cm}(\text{by}~\eqref{eq:first-identity})\\
			&= bp^4 + p^2(2b^3-3q^2+1) + 2pbq(q^2-3) + b^2(b^3 + 3q^2 -1).
		\end{align*}
		
		$\textbf{Case I:}$ $0<p\leq q$ (i.e. the class $\al$ is not K\"ahler). We have $$ 0 < p\leq q < p\sqrt{b}, ~~~ 3(p^2b - q^2) > b^3 - 1.$$ As a consequence we get $3p^2 > b^2 + b +1$, which implies $p>1$ since $b>1$. The above inequalities also imply $c_q <0$. In particular, $ p -b c_q >0$, $q - c_q >0$ and $0< c_q + \sqrt{1+c_q^2} < 1$.  Since $q\geq p >1$, we have $q > c_q + \sqrt{1+c_q^2}$, and $ p > b(c_q + \sqrt{1+c_q^2})$ if we assume $p\geq b$. On the other hand, suppose $p<b$. Then
		\begin{align*}
			G(p,q,b) &= b(p^2 + b^2)^2 + (3q^2 -1)(b^2-p^2) + 2pbq(q^2-3) \\ &> b(p^2 + b^2)^2 - 4pbq ~~~~~~~(\text{since}~q>1~\text{and}~0<p<b) \\ & > (p^2 + b^2)^2 - 4p^2b^2 ~~~~~~~(\text{since}~b>1~\text{and}~0<q<pb)\\ &=(p^2-b^2)^2\geq 0.
		\end{align*}
		Since $p>0$, we must have $p > b(c_q + \sqrt{1+c_q^2})$. As a consequence, the triple $(\Bl_{x_0}\PP^3, \al, \be)$ is always dHYM stable if $q\geq p >0$.
		
		$\textbf{Case II:}$ $p>q>0$ (i.e. the class $\al$ is K\"ahler). We have $$ 0< q < p < p\sqrt{b},~~~3(p^2b-q^2)>b^3-1.$$ If we put $\la:=\frac{p}{q}>1$, then 
		\begin{align*}
			G(p,q,b) = q^4 \la (\la^3 b - 3\la +2 b) + q^2 (2\la^2 b^3 +\la^2 -6\la b +3b^2) + b^2(b^3-1) >0.
		\end{align*}
		Here we used the fact that the functions $f(\la):= \la^3 b -3\la +2b$ and $g(\la):= 2\la^2 b^3 + \la^2 -6\la b +3b^2$ are positive on $(1,\infty)$ since $b>1$. Therefore, it implies that $p > b (c_q + \sqrt{1+c_q^2})$.
	\end{proof}
	
	\begin{rem}
		The inequality  $p > b (c_q + \sqrt{1+c_q^2})$ also holds if $p\in(0,b)$ and $\max(-p\sqrt{b},-\sqrt{3})<q\leq 0$, or if $p\geq b$ and $\max(-p\sqrt{b},-\frac{1}{\sqrt{3}})<q\leq 0$.
	\end{rem}
	
	\begin{rem}\label{rem:only-possibility-of-dhym-unstab}
		For $q\in[\frac{p}{b},p]$ with $p>0$ we shall also show that $q> c_q + \sqrt{1+ c_q^2}$ (see Lemma \ref{lem:stab-interval-p-by-b-to-p}). It implies $(\Bl_{x_0}\PP^3,\al,\be)$ is dHYM stable in this case. Therefore, if $\al$ is a K\"ahler class, the only possibility for occurrence of instability is when $0< q< p/b$. 
	\end{rem}

\vspace*{3mm}
%%%%%%%%%%%%%%%%%%%%%%%%%%%%%%%%%%%%%
\subsection{Auxiliary family of equations}
\label{section:aux family on P n blowup}
	
	Recall that we fixed real $(1,1)$ class $\al=p[H]-q[E]$. Now, for every $s\in\RR$ we set $$\al_s := p[H]-s[E]=\al + (q-s)[E]. $$  We then define the following subsets of the real line
	\begin{align*}
		\sS_{n} := \left\{ s\in\RR:~ \im(\al_s + \ii\be)^n \neq 0 \right\}, ~~ \sS_{n}^{+} :=  \left\{ s\in\RR:~ \im(\al_s + \ii\be)^n > 0 \right\}.
	\end{align*}
	Analogous to $\S 4.2$ in \cite{DMS24}, we consider the following auxiliary family of equations which are dHYM equations for the triple $(X = \Bl_{x_0}\PP^n, \al_s, \be)$:
	\begin{equation}
		\label{eq:family of dHYM equ on P n blowup}
		\left(\mathrm{Re}(\psi + \ii x)^n\right)' = c_s \left(\mathrm{Im}(\psi + \ii x)^n\right)', ~~\text{and}~~ \psi(1)= s,~ \psi(b) = p,
	\end{equation}
	where $s\in \sS_{n}$ and 
	$$c_s:= \frac{\re(\al_s + \ii\be)^n}{\im(\al_s + \ii\be)^n} = \cot \htheta(X, \al_s, \be). $$
	
	Solutions of the family \eqref{eq:family of dHYM equ on P n blowup} are smooth functions $\tpsi_s:[1,b]\rightarrow\RR$ with above boundary conditions and satisfying
	\begin{equation}\label{eq:family of dHYM solution on P n blowup}
		\mathrm{Re}(\tpsi_s + \ii x)^n -c_s \mathrm{Im}(\tpsi_s + \ii x)^n = A_s,
	\end{equation}
	where
	\begin{equation*}
		\begin{split}
			&c_s = \frac{\mathrm{Re}(p +\ii b)^n - \mathrm{Re}(s +\ii)^n}{\mathrm{Im}(p +\ii b)^n - \mathrm{Im}(s +\ii)^n}, \\
			&A_s = \mathrm{Re}(p +\ii b)^n - c_s \mathrm{Im}(p +\ii b)^n =
			\mathrm{Re}(s +\ii)^n - c_s \mathrm{Im}(s +\ii)^n. 
		\end{split}
	\end{equation*}
	We get such $\tpsi_s$ satisfying \eqref{eq:family of dHYM solution on P n blowup} if and only if $(X, \al_s, \be)$ is dHYM stable \cite{JacSh}, and in view of Lemma \ref{lem:stability for the triple X alpha be on blow up of P n}, if and only if 
	$$ \re(p + \ii b)^k -c_s \im(p+\ii b)^k >0, ~~\text{and}~~ \re(s+\ii)^k -c_s \im(s+\ii)^k >0 $$
	for all integers $k\in[1,n-1]$.

	\vspace*{1mm}
	Using the basic identity 
	$$\mathrm{Im}(z)\mathrm{Re}(w) - \mathrm{Re}(z)\mathrm{Im}(w) = \mathrm{Im}(z\bar w)$$ for any $z,w\in\CC$, we get following identities for any integer $k\in[1,n]$:
	\begin{equation}\label{eq:first-identity}
		\begin{split}
			&\left(\mathrm{Re}(p+\ii b)^k - c_s \mathrm{Im}(p+\ii b)^k \right)\cdot \im(\al_s + \ii\be)^n \\ &=
			\mathrm{Im}\left((p-\ii b)^k\left((p+\ii b)^n -(s+\ii)^n\right)\right), 
		\end{split}
	\end{equation}
	and
	\begin{equation}\label{eq:second-identity}
		\begin{split}
			&\left(\mathrm{Re}(s+\ii)^k - c_s \mathrm{Im}(s+\ii)^k \right)\cdot \im(\al_s + \ii\be)^n \\ &= \mathrm{Im}\left((s-\ii)^k\left((p+\ii b)^n -(s+\ii)^n\right)\right).
		\end{split}
	\end{equation}
	Now	applying these identities we obtain
	\begin{equation}\label{eq:derivative-form-c s}
		\frac{d c_s}{d s} 
		= -\frac{n \Big(\mathrm{Re}(s+\ii)^{n-1} - c_s \mathrm{Im}(s+\ii)^{n-1}\Big)}{\mathrm{Im}(p+\ii b)^n -\mathrm{Im}(s+\ii)^n}.
	\end{equation}
	It implies that $c_s$ is strictly decreasing on $\sS_{n}^{+}$ if and only if $$\re(s+\ii)^{n-1} > c_s \im(s+\ii)^{n-1}.$$ 
	
	\begin{lem}\label{lem:monotonicity-psi-s-any-dim}
		Suppose $s\in \sS^{+}_{n}$ and the triple $(X, \al_s, \be)$ is dHYM stable. Then the sequence of functions $\{\tpsi_s\}$ is strictly increasing.
	\end{lem}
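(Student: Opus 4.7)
The plan is to differentiate the defining algebraic identity \eqref{eq:family of dHYM solution on P n blowup} with respect to the parameter $s$ and read off the sign of $\phi_s(x):=\d_s\tpsi_s(x)$ from an explicit factored formula. Since $\tpsi_s$ solves the ODE corresponding to \eqref{eq:family of dHYM equ on P n blowup} with boundary conditions $\tpsi_s(1)=s,\;\tpsi_s(b)=p$ that depend smoothly on $s$ (this is standard once dHYM-stability gives existence and smoothness away from degeneration), $\phi_s(x)$ is well-defined and smooth, and the boundary values $\phi_s(1)=1,\;\phi_s(b)=0$ follow immediately.

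Differentiating \eqref{eq:family of dHYM solution on P n blowup} in $s$ and using $A_s=\re(s+\ii)^n - c_s\im(s+\ii)^n$ to compute $A_s'(s)$, I would obtain
\begin{equation*}
n\phi_s\bigl[\re(\tpsi_s+\ii x)^{n-1} - c_s\im(\tpsi_s+\ii x)^{n-1}\bigr]
= n\bigl[\re(s+\ii)^{n-1}-c_s\im(s+\ii)^{n-1}\bigr] + c_s'(s)\bigl[\im(\tpsi_s+\ii x)^n - \im(s+\ii)^n\bigr].
\end{equation*}
Substituting the expression \eqref{eq:derivative-form-c s} for $c_s'(s)$ and simplifying, the right hand side factors cleanly as
\begin{equation*}
n\bigl[\re(s+\ii)^{n-1}-c_s\im(s+\ii)^{n-1}\bigr]\cdot\frac{\im(p+\ii b)^n - \im(\tpsi_s+\ii x)^n}{\im(p+\ii b)^n - \im(s+\ii)^n},
\end{equation*}
yielding a product formula for $\phi_s(x)$ in which the dependence on $x$ is completely explicit.

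Finally, I would verify the sign of each of the four factors. The factor $\re(s+\ii)^{n-1}-c_s\im(s+\ii)^{n-1}$ is positive by Lemma~\ref{lem:stability for the triple X alpha be on blow up of P n} applied to the dHYM-stable triple $(X,\al_s,\be)$. The denominator $\re(\tpsi_s+\ii x)^{n-1}-c_s\im(\tpsi_s+\ii x)^{n-1}$ is positive on $[1,b]$ by the last assertion of Lemma~\ref{lem:supercritical equiv to positivity and monotonicity_blowup of P n}, since $\tpsi_s$ has constant Lagrangian phase $\htheta(X,\al_s,\be)\in(0,\pi)$. The denominator $\im(p+\ii b)^n - \im(s+\ii)^n$ is positive by the hypothesis $s\in\sS_n^+$. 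The remaining factor $\im(p+\ii b)^n - \im(\tpsi_s+\ii x)^n$ is positive for $x\in[1,b)$ and vanishes only at $x=b$, by the strict monotonicity in $x$ of $\im(\tpsi_s+\ii x)^n$ given in Lemma~\ref{lem:supercritical equiv to positivity and monotonicity_blowup of P n}. Hence $\phi_s(x)>0$ for $x\in[1,b)$, which gives strict monotonicity $\tpsi_{s_1}(x)<\tpsi_{s_2}(x)$ on $[1,b)$ whenever $s_1<s_2$ (with equality forced at $x=b$ by the common boundary value $p$). The only nontrivial point is the algebraic simplification producing the clean product formula; once that is carried out, everything reduces to positivity statements already established in the two preceding lemmas.
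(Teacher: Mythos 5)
Your proposal is correct and follows essentially the same route as the paper: differentiate the level-set identity \eqref{eq:family of dHYM solution on P n blowup} in $s$ and deduce the sign of $\partial_s\tpsi_s$ from the positivity statements in Lemma \ref{lem:supercritical equiv to positivity and monotonicity_blowup of P n} (for the coefficient $\re(\tpsi_s+\ii x)^{n-1}-c_s\im(\tpsi_s+\ii x)^{n-1}$ and the monotonicity of $\im(\tpsi_s+\ii x)^n$) and Lemma \ref{lem:stability for the triple X alpha be on blow up of P n} (for the sign of $dc_s/ds$ via \eqref{eq:derivative-form-c s}). The only cosmetic difference is that the paper differentiates $A_s=\re(p+\ii b)^n-c_s\im(p+\ii b)^n$ and keeps the factor $dc_s/ds<0$ unexpanded, whereas you differentiate the other expression for $A_s$ and substitute \eqref{eq:derivative-form-c s} to get an explicit product formula; the two identities coincide after that substitution.
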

	
	\begin{proof}
		Since $(X, \al_s, \be)$ is dHYM stable, there exist a unique $\tpsi_s \in C^\infty([1,b],\RR)$ satisfying \eqref{eq:family of dHYM solution on P n blowup} with $\tpsi_s(1)=s$, $\tpsi_s(b)=p$. Observe that the family \eqref{eq:family of dHYM equ on P n blowup} can also be written as $$\theta_{\tpsi_s} = \arccot(c_s)\in(0,\pi).$$ Therefore, applying Lemma \ref{lem:supercritical equiv to positivity and monotonicity_blowup of P n} the functions $\mathrm{Im}(\tpsi_s + \ii x)^k$ are strictly increasing for all integers $k\in[1,n]$, and positive for $k<n$ on $[1,b]$. Moreover, $\tpsi_s >0$ when $n\geq 3$, and we have $ \mathrm{Re}(\tpsi_s + \ii x)^k > c_s \mathrm{Im}(\tpsi_s + \ii x)^k$ for $k<n$. Differentiating \eqref{eq:family of dHYM solution on P n blowup} with respect to $s$ we obtain
		\begin{align*}
			& n \left(\mathrm{Re}(\tpsi_s +\ii x)^{n-1} -c_s \mathrm{Im}(\tpsi_s +\ii x)^{n-1}\right) \frac{d \tpsi_s}{d s} \\ &= - \Big(\mathrm{Im}(p+\ii b)^n - \mathrm{Im}(\tpsi_s +\ii x)^n\Big) \frac{d c_s}{d s}.
		\end{align*}
		Therefore, we get the desired result because both the terms in brackets are positive and $\frac{dc_s}{d s}<0$ by the hypothesis. 
	\end{proof}
	
	We let 
	$$F(s):= \mathrm{Im}\Big((s-\ii)^{n-1}\big((p+\ii b)^n - (s+\ii)^n\big)\Big),$$ 
	which is a polynomial of degree $2n-2$. Using \eqref{eq:second-identity} (for $k=n-1$) we also see that
	\begin{align}
		F(s) = \left(\mathrm{Re}(s+\ii)^{n-1} -c_s \mathrm{Im}(s+\ii)^{n-1}\right)\im(\al_s + \ii\be)^n
	\end{align}
	for all $s\in\sS_{n}$. One can check that $$ F\left(\frac{p}{b}\right) = \frac{(b^n-1)(p^2+b^2)^{n-1}}{b^{2n-2}} >0.$$
	Differentiating the polynomial $F$ we obtain
	\begin{align*}
		&\frac{1}{n-1} F'(s) \\ &=  \mathrm{Im}\left((s-\ii)^{n-2}\left((p+\ii b)^n - (s+\ii)^n\right)\right) \\ &= \left(\re(s+\ii)^{n-2} - c_s \im(s+\ii)^{n-2}\right) \im(\al_s + \ii \be)^n. ~~~~(\text{by}~\eqref{eq:second-identity})
	\end{align*}
	
	Similar to Lemma $4.4$ in \cite{DMS24} for $n=2$ case, we have the following result.
	
	\begin{lem}\label{lem:bigness-blowup-P 3-family s}
		Suppose $n=3$ and $p>0$. Then for $s\in \sS_{3}^{+}\cap(0,\infty)$ we have $p-bc_s >0$. In particular,  the class $\al_s - c_s \be$ is big for all 
		$s \in \sS^{+}_{3}\cap \left(0, \frac{p}{b}\right)$.
	\end{lem}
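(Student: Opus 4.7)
The first statement is essentially already contained in Lemma \ref{lem:inequality involving p and b holds for blowup of P 3}. I would apply that lemma with the role of $q$ played by $s>0$: the hypotheses $\im(\alpha_s+\ii\beta)^{3}>0$ and $p,s>0$ are precisely what is assumed, so the lemma yields the stronger inequality
\begin{equation*}
	p > b\bigl(c_s + \sqrt{1+c_s^{2}}\bigr).
\end{equation*}
Since $\sqrt{1+c_s^{2}}>0$, this immediately gives $p-bc_s>0$.

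For bigness of $\alpha_s - c_s\beta$, I would write
\begin{equation*}
	\alpha_s - c_s\beta = (p - bc_s)[H] - (s - c_s)[E],
\end{equation*}
and apply the bigness criterion recalled at the beginning of Section \ref{subsec:Calabi-symm-reduction}: a class $p'[H]-q'[E]$ on $\Bl_{x_0}\PP^{3}$ is big if and only if $p' > \max(0, q')$. Hence it suffices to verify $p-bc_s > \max(0, s-c_s)$. The first alternative $p-bc_s>0$ is part (1). For the second, the required strict inequality $p-bc_s > s-c_s$ is equivalent to
\begin{equation*}
	p - s > (b-1)\, c_s.
\end{equation*}
The same Lemma \ref{lem:inequality involving p and b holds for blowup of P 3}, rewritten as $p/b > c_s + \sqrt{1+c_s^{2}}$, in particular yields $c_s < p/b$ strictly. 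Combining this strict bound with the hypothesis $s < p/b$ and $b>1$ gives the chain
\begin{equation*}
	(b-1)c_s < (b-1)\tfrac{p}{b} = p - \tfrac{p}{b} < p - s,
\end{equation*}
which is exactly the inequality needed.

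There is no serious obstacle: the whole result is a short corollary of Lemma \ref{lem:inequality involving p and b holds for blowup of P 3}. The only mild point to watch is that $c_s$ may a priori be negative (as happens in Case I of that lemma), but since the argument uses only the bound $c_s < c_s + \sqrt{1+c_s^{2}}$, no case distinction on the sign of $c_s$ is necessary. Likewise no subcase for $s-c_s \le 0$ needs to be handled separately, because in that event $\max(0, s-c_s) = 0$ and part (1) alone already delivers bigness.
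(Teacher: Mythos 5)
Your proposal is correct and follows essentially the same route as the paper: both deduce $p>b\bigl(c_s+\sqrt{1+c_s^{2}}\bigr)$ by applying Lemma \ref{lem:inequality involving p and b holds for blowup of P 3} with $s$ in place of $q$, and both verify bigness via $p-bc_s>\max(s-c_s,0)$ using the chain $(b-1)c_s<(b-1)\tfrac{p}{b}\leq p-s$. Your write-up merely spells out a few steps the paper leaves implicit; there is nothing to correct.
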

	
	\begin{proof}
		Since any $s\in\sS^{+}_{3}\cap(0,\infty)$ satisfies the assumptions on $q$ in Lemma \ref{lem:inequality involving p and b holds for blowup of P 3}, we obtain $p > b(c_s + \sqrt{c_s^2 +1})$, and hence $c_s< \frac{p}{b}$. Now for any  $0<s\leq\frac{p}{b}$ we have $\frac{p-s}{b-1}\geq\frac{p}{b}$. Since $ \al_s - c_s\be = (p-b c_s)[H] - (s- c_s)[E]$, it is a big class if $p-bc_s > \max( s-c_s, 0)$, and this holds for any $s \in \sS^{+}_{3}\cap \left(0, \frac{p}{b}\right)$.
	\end{proof}
	
	As mentioned in Remark \ref{rem:only-possibility-of-dhym-unstab} we now show that $(\Bl_{x_0}\PP^3,\al,\be)$ is dHYM stable for $q\in[p/b,p]$ (see Lemma \ref{lem:stab-interval-p-by-b-to-p}). For $n=3$ the polynomial $F$ defined above is given by
	\begin{equation}\label{eq:poly-F-in-dim-3-explicit-form}
		\begin{split}
			F(s) &= -s^4 + s^2(3p^2b - b^3 -2) -2sp(p^2 - 3b^2) - (3p^2b -b^3 + 1) \\ &= (s^2 -1 -2s c_s) \im(\al_s + \ii \be)^3.
		\end{split}
	\end{equation}
	Note that putting $p=18$, $q=b=n=3$ we get the explicit expression of $F$ as mentioned in Remark \ref{rem:explicit values of c_q 4th deg poly and zeta}. Differentiating we obtain
	\begin{equation}\label{eq:derivatives-poly-F-in-dim-3-explicit-form}
		\begin{split}
			&F'(s) = 2(-2s^3 + (3p^2b -b^3 -2)s - (p^3 - 3pb^2)) = 2(s - c_s) \im(\al_s + \ii \be)^3,\\
			&F''(s) = 2(-6s^2+(3p^2b -b^3 -2)). 
		\end{split}
	\end{equation}
	We already showed that $F(p/b)>0$ for any dimension $n$. Now, a straightforward computation also implies $F(\sqrt{\frac{3p^2b -b^3 +1}{3}}) > 0$ for $n=3$ and $\sqrt{\frac{b^3-1}{3b}} < p < \frac{b}{\sqrt{3}}$.
	
	\begin{lem}\label{lem:stab-interval-p-by-b-to-p}
		Suppose $n=3$. The polynomial $F$ is positive (and hence $s > c_s + \sqrt{c_s^2 +1}$) for all $s\in[\frac{p}{b}, p]$ and $3(p^2b -s^2) > b^3 -1$ (with $p>0$).
	\end{lem}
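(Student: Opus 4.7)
The plan is to deduce positivity of $F$ on the interval $[p/b,p]$ from (a) positivity at the left endpoint, and (b) monotonicity of $F$ on this interval. The latter follows from the stability inequality $p>bc_s$ already established in Lemma \ref{lem:inequality involving p and b holds for blowup of P 3}.

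First I would record that the hypothesis $3(p^2b-s^2)>b^3-1$ is exactly the condition $\im(\al_s+\ii\be)^3>0$, so the relevant range of $s$ lies inside $\sS_3^+$. Evaluating the general formula for $F(p/b)$ (already noted for arbitrary dimension $n$) in the case $n=3$ gives
\[
F\!\left(\tfrac{p}{b}\right) = \frac{(b^3-1)(p^2+b^2)^{2}}{b^{4}} > 0,
\]
which handles the left endpoint. Also the hypothesis at $s=p$ reads $3p^2(b-1)>(b-1)(b^2+b+1)$, i.e.\ $3p^2>b^2+b+1$; in particular $p>0$, so we may apply Lemma \ref{lem:inequality involving p and b holds for blowup of P 3} freely.

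For monotonicity I would use the identity
\[
F'(s) \;=\; 2(s-c_s)\,\im(\al_s+\ii\be)^3
\]
recorded in \eqref{eq:derivatives-poly-F-in-dim-3-explicit-form}. For any $s \in [p/b,p]$ satisfying the hypothesis we have $s>0$ and $s\in \sS_3^+$, so Lemma \ref{lem:inequality involving p and b holds for blowup of P 3} applies to the triple $(\Bl_{x_0}\PP^3,\al_s,\be)$ and yields $p > b\bigl(c_s + \sqrt{1+c_s^2}\bigr) > b c_s$, i.e.\ $c_s < p/b \le s$. Combined with $\im(\al_s+\ii\be)^3>0$ on $\sS_3^+$, this gives $F'(s)>0$ throughout $[p/b,p]$, so $F$ is strictly increasing there.

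Putting the two pieces together, $F(s) \ge F(p/b) > 0$ for every $s$ in the interval. Finally, since $F(s) = (s^2-1-2sc_s)\im(\al_s+\ii\be)^3$ on $\sS_3$ by \eqref{eq:poly-F-in-dim-3-explicit-form}, and $\im(\al_s+\ii\be)^3>0$, positivity of $F$ is equivalent to $s^2-2sc_s-1>0$; since $s>0$, this reads $s > c_s+\sqrt{c_s^2+1}$, as required. The only real content is verifying that Lemma \ref{lem:inequality involving p and b holds for blowup of P 3} applies, and the rest is the endpoint computation and the sign of $F'$; no new obstacle appears beyond what has already been handled in the previous lemmas.
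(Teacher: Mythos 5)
Your proof is correct, but it takes a genuinely different route from the paper's. The paper argues entirely by elementary polynomial estimates: it computes the second endpoint value $F(p)=(b-1)\bigl(3p^4+p^2(-b^3+5b+2)+b^2+b+1\bigr)>0$ and then splits into two cases according to the sign of $3p^2b-b^3-2$, using concavity of $F$ on $\RR_+$ when $p\le\sqrt{(b^3+2)/3b}$ (so positivity at both endpoints suffices) and a direct lower bound $\tfrac12 F'(s)\ge 2p(b^3-1)/b>0$ on $[p/b,p]$ otherwise. You instead use only the left endpoint $F(p/b)>0$ together with the structural identity $F'(s)=2(s-c_s)\,\im(\al_s+\ii\be)^3$ and the inequality $p>b\bigl(c_s+\sqrt{1+c_s^2}\bigr)$ from Lemma \ref{lem:inequality involving p and b holds for blowup of P 3} (equivalently, $c_s<p/b$, which is exactly what Lemma \ref{lem:bigness-blowup-P 3-family s} records), giving $F'>0$ on the whole admissible range with no case split and no need to evaluate $F(p)$. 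Your version is shorter and reuses earlier results rather than redoing estimates; the paper's version is self-contained at the cost of the case analysis. One small point you should make explicit: to integrate $F'$ from $p/b$ up to a given admissible $s$ you need every intermediate $t\in[p/b,s]$ to satisfy $3(p^2b-t^2)>b^3-1$ as well; this is immediate since $t\le s$ implies $3(p^2b-t^2)\ge 3(p^2b-s^2)$, so the admissible set is an interval with left endpoint $p/b$, but as written you assert $F'>0$ ``throughout $[p/b,p]$'' which is slightly stronger than what you have verified. The final step converting $F(s)>0$ into $s>c_s+\sqrt{c_s^2+1}$ via \eqref{eq:poly-F-in-dim-3-explicit-form} and $s>0$ matches the intended reading of the statement.
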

	
	\begin{proof}
		Observe that since $s\geq \frac{p}{b}$ we must have $p > \frac{b}{\sqrt{3}}$. Then $$ F(p) = (b-1)(3p^4 + p^2(-b^3 + 5b +2) + b^2 + b +1) > 0.$$
		Now, if $p\leq \sqrt{\frac{b^3+2}{3b}}$, the polynomial $F$ is concave on $\RR_{+}$, and we get that $F$ is positive on $[\frac{p}{b}, p]$. While for $p> \sqrt{\frac{b^3+2}{3b}}$ and $s\in[p/b, p]$ we get $$ \frac{1}{2} F'(s) \geq -2p^3 + (3p^2b -b^3 -1)\frac{p}{b} -p^3 + 3pb^2 = \frac{2p(b^3 -1)}{b} >0,$$ and hence again $F$ is positive on $[\frac{p}{b}, p]$. As a consequence, we get that $s > c_s + \sqrt{c_s^2 +1}$ for all $s\in[\frac{p}{b}, p]$ and $3(p^2b -s^2) > b^3 -1$. 
	\end{proof}
	
	Motivated by Lemma $4.6$ in \cite{DMS24},  we define the following constant for $X=\Bl_{x_0}\PP^n$
	$$\xi := \inf \left\{ s\geq q:~ s\in\sS^{+}_{n}, ~~ \re(s+\ii)^{k} \geq c_s \im(s+\ii)^{k}~~\forall~k = 1, \cdots, n-1\right\}. $$
	For $n=2$, we have $\xi= bp - \sqrt{(b^2-1)(p^2 +1)}$, and there exists a unique $\tpsi_{\xi}$ solving \eqref{eq:family of dHYM solution on P n blowup} (cf. \cite{DMS24}). While for $n=3$ the following hold:
	\begin{equation*}
		q \leq \xi < \min\big(\frac{p}{b}, \sqrt{\frac{3p^2b-b^3+1}{3}}\big) =: p_{\star}~(\text{say}), ~~ \xi > c_{\xi}, ~~\text{and}~~
		\xi^2 - 1 - 2\xi c_\xi = 0.
	\end{equation*}
	Observe that $\xi=q$ if and only if $ q\geq c_q + \sqrt{c_q^2 + 1}$, i.e., if and only if $(X,\al,\be)$ is dHYM semi-stable. On the other hand, we get $\xi>q$ in the case $0< q < c_q + \sqrt{c_q^2 + 1}$. Since we assumed $q>0$ for the unstable case, it is easy to see that $s^2 -1 -2s c_s < 0$ for all $s\in(q,\xi)$. We now define the following constant for $n=3$:
	\begin{equation}\label{eq:xi prime defn for blowup pf CP3 at a point}
		\xi' : = \sup \left\{ s \in (\xi, p_{\star}) : ~ t^2 -1 - 2t c_t > 0 ~\text{for all}~ t\in (\xi, s) \right\}.
	\end{equation}
	Then $s^2 -1 -2s c_s > 0$ for all $s\in (\xi, \xi')$, and in particular, $s > c_s$ since $\xi>0$. It implies that $\xi$ is the {\em unique solution} of the polynomial $F$ in the interval $(q, \xi')$. Moreover, $F$ is positive on $(\xi, \xi')$ and negative on $(q, \xi)$. In particular, by \eqref{eq:derivative-form-c s} we see that $c_s$ is strictly increasing (resp. decreasing) on $s\in(q, \xi)$ (resp. $s\in(\xi, \xi')$), and attains maximum value at $s=\xi$. Note that $p_{\star}=p/b$ if and only if $p\geq \frac{b}{\sqrt{3}}$.
	
	\begin{rem}
		For $n=2$ the corresponding $\xi'$ is defined as the supremum of all $s\in(\xi,bp)$ such that $t>c_t$ for all $t\in(\xi,s)$, and then $\xi'=bp$ as in \cite{DMS24}. But for $n\geq 4$ it is {\em not} clear how to define the constant $\xi'$ so that $\xi$ will be the {\em unique} solution of the polynomial $F$ (for general $n$) in the interval $(q, \xi')$.
	\end{rem}
	
	In terms of the constant $c_s$ given above (with $n=3$), the new dHYM slope $\zeta$ defined in \eqref{eq:new dHYM slope zeta for CP3 blowup at a point} is given by
	\begin{align*}
		\zeta = \sup \left\{ c_s : ~ s\in \sS_{3}^{+}\cap(0,\infty) \right\}.
	\end{align*}
	Now from the derivative formula \eqref{eq:derivative-form-c s} of $c_s$ with respect to the variable $s$ and from the definition of the constants $\xi$ and $\xi'$ given above, we easily see that $c_s$ is strictly increasing on the interval $(q,\xi)$ and strictly decreasing on the interval $(\xi, \xi')$. In particular, it attains maximum at $s=\xi$. Therefore, we must have $\zeta = c_{\xi}$.

	\vspace*{1mm}	
	Similar to \cite{DMS24} for $n=2$, we have a unique branch $\tpsi_{\xi}$ solving \eqref{eq:DHYM ODE solution on P^n blown up at one point} with $\tpsi_{\xi}(1)=\xi$, $\tpsi_{\xi}(b)=p$ for $n=3$. Indeed, taking $s=\xi$ and $n=3$ in \eqref{eq:DHYM ODE solution on P^n blown up at one point} we get
	\begin{equation}
		\tpsi_{\xi}^{3} - 3x^2\tpsi_{\xi} - c_{\xi}(3x\tpsi_{\xi}^{2} - x^3) = A_{\xi}.
	\end{equation}
	In fact, the constants $c_{\xi}$ and $A_{\xi}$ are given by $c_{\xi}=\frac{\xi^2 -1}{2\xi}$ and $A_{\xi} = -\frac{(\xi^2 +1)^2}{2\xi}$. Now if we consider $\Psi_{\xi}:=\tpsi_{\xi} - c_{\xi}x$, then $\Psi_\xi$ satisfies the following depressed cubic equation
	\begin{equation}\label{eq:depressed-cubic-blowup-P3}
		\Psi_{\xi}^{3} + R_{1}\Psi_{\xi} + R_2 = 0,
	\end{equation}
	where $R_1 = -3(c_{\xi}^2 +1)x^2$ and $R_2= -2c_{\xi}(c_{\xi}^2 +1)x^3 - A_{\xi}$. Then the discriminant of \eqref{eq:depressed-cubic-blowup-P3} is $\Delta:=-(4R_{1}^{3} + 27 R_{2}^{2})$, i.e.
	$$ \Delta=\Delta(x) = 108(c_{\xi}^2 +1)^{2}(x^6 + 2\xi c_{\xi} x^3 -\xi^2)$$ which is positive on $(1,b]$ and zero at $x=1$. Thus, we get a unique such branch $\tpsi_{\xi}$.

\vspace*{3mm}
%%%%%%%%%%%%%%%%%%%%%%%%%%%%%%%%%%%%
\subsection{Examples of two kinds of dHYM instability}
\label{subsec:examples-dhym-unstab-types}
	
Recall $(X=\Bl_{x_0}\PP^3,\al,\be)$ is dHYM stable if and only if $q\geq c_q + \sqrt{1+ c_q^2}$. In particular, if $q\leq 0$, it is dHYM unstable. On the other hand, we proved in Lemma \ref{lem:inequality involving p and b holds for blowup of P 3} and Lemma \ref{lem:stab-interval-p-by-b-to-p} that it is dHYM stable if $q\geq\frac{p}{b}>0$.

	\vspace*{1mm}
	We now give an example of $c_q < q \leq c_q + \sqrt{1+ c_q^2}$, where $c_q$ could be positive or zero or negative, and we say this is {\em first kind of dHYM instability} where instability occurs along co-dimension one subvarieties. Next we give an example of $0< q \leq c_q$ which we say {\em second kind of dHYM instability} where instability occurs along co-dimension two subvarieties.

	\vspace*{1mm}
	For simplicity, if we put $\la:=p/s$, then from \eqref{eq:poly-F-in-dim-3-explicit-form} and \eqref{eq:derivatives-poly-F-in-dim-3-explicit-form} we get
	\begin{align*}
		F(s) &= s^4 (-2\la^3 + 3\la^2 b -1) + s^2 (-3\la^2b + 6\la b^2 -b^3 -2) + (b^3-1) \\
		F'(s) &= 2 s^3(-\la^3 + 3\la^2 b - 2) + 2s(3\la b^2 -b^3 -2).
	\end{align*}
	Now considering $\la=3b$ we get
	$$ 2(s - c_s)\big(3(9b^3 -1)s^2 - (b^3 -1)\big) =  F'(s) = 4s (4b^3 -1 -s^2). $$ It implies $0<s \leq c_s$ if $s\geq \sqrt{4b^3 -1}$. Therefore, replacing $s$ by $q$, we get second kind of dHYM instability $0< q \leq c_q$ when $p=3bq$ and $q \geq \sqrt{4b^3 -1}$.

	\vspace*{1mm}
	Since the polynomial $P_0(\la):= -\la^3 + 3\la^2 b -2$ is positive on $[a', a'']$, where $a'\in(0,1)$ and $a''\in(2b,3b)$ are only positive zeros of $P_0$, and $a' < \frac{3b^3 + 2}{3b^2}$ because $ P_0 \left(\frac{3b^3 + 2}{3b^2}\right)>0$, we must have $s > c_s$ if $\frac{3b^3 + 2}{3b^2} \leq \frac{p}{s} \leq a''$ (and both $p,s$ are positive).

	\vspace*{1mm}
	Now consider $\la= 2b$ (i.e. $p=2bs$). Since $\frac{p}{s}=2b \in \left(\frac{3b^3 + 2}{3b^2}, a''\right)$, we have $s > c_s$. For this choice $$ F(s) = -(4b^3 + 1)s^4 - (b^3 + 2)s^2 + b^3 -1. $$ We can see that $F(s)<0$ for all $s > b_{\star}$, where
	$$ b_\star := \left(\frac{\sqrt{b^3(17b^3 -1)} -b^3 -2}{2(4b^3 +1)}\right)^{1/2}, $$ 
	and hence we get that $0< s < c_s + \sqrt{c_s^2 + 1}$ for the choice $p=2bs$ and $s > b_\star$. Moreover, $c_s$ is positive when $s > \left(\frac{3(2b^3-1)}{8b^3 -1}\right)^{1/2}=: b^\star$, equals to zero when $s = b^\star$, and negative when $b_\star < s < b^\star$. This can be observed from the following inequalities $\sqrt{\frac{b^3-1}{3(4b^3-1)}} < b_{\star} < b^\star < 1$. Therefore, once again replacing $s$ by $q$, we get first kind of dHYM instability $c_q < q < c_q + \sqrt{c_q^2 +1}$ when $p = 2bq$ with $q> b_\star$. Note that the explicit example mentioned in Remark \ref{rem:explicit values of c_q 4th deg poly and zeta} is of this kind.

\vspace*{5mm}
%%%%%%%%%%%%%%%%%%%%%%%%%%%%%%%%%%%%%%
\section{\large Proof of the main theorem}
\label{sec:proof-main theorem-P3-blowup}
	
In this section we prove Theorem \ref{thm:singular-dhym-P3 blowup-cotangnet flow_Introduction}. As we will see below, the key point is to find an appropriate initial Calabi potential $\psi_0$ so that the dHYM cotangent flow exists for all time and converges to some limit. More precisely, as in \cite{DMS24} for $n=2$ case, we assume that there exists an initial $\psi_0\in C^\infty([1,b],\RR)$ with $\psi_0(1)=q$, $\psi_0(b)=p$ satisfying $\theta_{\psi_0} <\pi$ and satisfies the condition \eqref{eq:comparison and monotonicity for initial function_Introduction}, namely
	\begin{equation}\tag{\textbf{H1}}
		\label{eq:comparison and monotonicity for initial function}
		\psi_0 \leq \tpsi_\xi ~~\text{and}~~ (\cot \theta_{\psi_0})' > 0.
	\end{equation}
	From the flow equation \eqref{eq:cotangent flow on P^n blowup at a point} it follows that the second condition of \eqref{eq:comparison and monotonicity for initial function} implies
	\begin{equation}\label{eq:t-derivative-at-zero-nonnegative-assumption}
		\dot\psi(x,0)\geq 0.
	\end{equation}
	Moreover, from Lemma \ref{lem:supercritical equiv to positivity and monotonicity_blowup of P n} we see that the condition $\theta_{\psi_0}<\pi$ implies $\theta_\om(\chi_0)<\pi$, where $\chi_0$ is the closed real $(1,1)$-form corresponding to the Calabi potential $\psi_0$. In $\S$ \ref{subsec:initial-function-a-choice}, we will define an initial function $\psi_0$ which satisfies all but the first condition of \eqref{eq:comparison and monotonicity for initial function} for arbitrary $b,p,q$ with $b>1$ and $0<q<p_{\star}$ (see $\S$ \ref{section:aux family on P n blowup} for the constant $p_{\star}$). Moreover, for the particular choice $p=2bq=18$ and $q=b=3$ the first condition of \eqref{eq:comparison and monotonicity for initial function} also holds for this initial function (see Lemma \ref{lem:initial-func-less-than-limit}).

	\vspace*{1mm}
	Assuming the existence of such an initial function $\psi_0$ satisfying $\theta_{\psi_0}<\pi$ and \eqref{eq:comparison and monotonicity for initial function}, we will prove the Theorem \ref{thm:singular-dhym-P3 blowup-cotangnet flow_Introduction}.

\vspace*{3mm}
%%%%%%%%%%%%%%%%%%%%%%%%%%%%%%%%%%%%%
\subsection{Comparison and monotonicity along the flow}\label{subsec:dhym-key-lem}

We now prove a comparison and monotonicity result along the cotangent flow under the the assumption that there exists an initial function $\psi_0$ satisfying \eqref{eq:comparison and monotonicity for initial function} for any dimension $n$. The arguments are basically appropriate modification of the Lemmas $4.9$ and $4.10$ in \cite{DMS24}.
	
	\begin{lem}\label{lem:comparison-to-limit-gen dim} 
		Suppose the first assumption of \eqref{eq:comparison and monotonicity for initial function} holds and assume that there exists a unique branch $\tpsi_{\xi}$ of the level set \eqref{eq:DHYM ODE solution on P^n blown up at one point} with $\tpsi_{\xi}(1)=\xi$ and $\tpsi_{\xi}(b)=p$. (Note that for $n=2$ and $n=3$ we have such a unique $\tpsi_{\xi}$.)
		For all $t\in [0,\infty)$ and $x\in [1,b]$, we have $\psi(x,t) \leq \tilde\psi_\xi(x)$ along the cotangent flow.
	\end{lem}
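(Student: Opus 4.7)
The plan is to apply a parabolic comparison principle to $w(x,t):=\psi(x,t)-\tilde\psi_\xi(x)$ on the space-time cylinder $[1,b]\times[0,\infty)$. First I would verify the parabolic-boundary inequality $w\leq 0$: initially by the first clause of \eqref{eq:comparison and monotonicity for initial function}; at $x=b$ by $w(b,t)=p-p=0$; and at $x=1$ by $w(1,t)=q-\xi\leq 0$, since $\xi\geq q$ by the very definition of $\xi$ in $\S$\ref{section:aux family on P n blowup}.

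The key structural observation is that $\tilde\psi_\xi$ is a \emph{stationary} solution of the reduced flow \eqref{eq:cotangent flow on P^n blowup at a point}: since $\theta_{\tilde\psi_\xi}\equiv\arccot(c_\xi)$ is constant in $x$, one has $(\cot\theta_{\tilde\psi_\xi})'\equiv 0$ and hence $L_2(\tilde\psi_\xi)\equiv 0$. Therefore $w$ satisfies $\dot w=L_2(\psi)-L_2(\tilde\psi_\xi)$, and setting $\psi_s:=\tilde\psi_\xi+sw$ and applying the mean value theorem pointwise in $s\in[0,1]$ recasts this as the linear parabolic equation
\[
\dot w \;=\; a(x,t)\,w_{xx}+b(x,t)\,w_x+c(x,t)\,w,
\]
whose leading coefficient $a(x,t)=\int_0^1 Q(x)(\csc^2\theta_{\psi_s})(1+\psi_s'^{\,2})^{-1}\,ds$ is strictly positive on $(1,b)\times(0,T)$ as soon as $\{\psi_s\}$ stays in the supercritical regime. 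The standard weight $v:=e^{-Kt}w$ with $K>\sup|c|$ then converts this into an equation with nonpositive zeroth-order term, and the weak maximum principle on the finite cylinder $[1,b]\times[0,T]$ forces $v\leq 0$, hence $w\leq 0$; taking $T\to\infty$ gives the lemma.

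The main technical hurdle is controlling $\theta_{\psi_s}<\pi$ uniformly in $s\in[0,1]$ so that the linearization coefficients are well-defined. The endpoints $s=0,1$ are safe: $\theta_{\tilde\psi_\xi}=\arccot(c_\xi)\in(0,\pi)$ identically, and $\theta_\psi<\pi$ is preserved along the cotangent flow under the hypothesis $\theta_{\psi_0}<\pi$, by \cite[Theorem 1.2]{FYZ}. Should uniform control over the entire interpolation prove delicate, I would instead run the argument directly at a first touching point: assuming $\max w>0$ for contradiction, pick the first $t_0>0$ and an interior $x_0\in(1,b)$ with $w(x_0,t_0)=0$ and $w\leq 0$ on $[1,b]\times[0,t_0]$. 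The matching $\psi=\tilde\psi_\xi$ and $\psi'=\tilde\psi_\xi'$ at this point forces $\theta_\psi=\theta_{\tilde\psi_\xi}\in(0,\pi)$, and subtracting $L_2(\tilde\psi_\xi)=0$ reduces the flow equation to
\[
\dot\psi(x_0,t_0)\;=\;\frac{Q(x_0)\csc^2\theta_{\tilde\psi_\xi}(x_0)}{1+\tilde\psi_\xi'(x_0)^2}\,w_{xx}(x_0,t_0)\;\leq\;0,
\]
which together with $\dot w(x_0,t_0)\geq 0$ and an application of the strong maximum principle propagates $w\equiv 0$ backward in time, contradicting the strict inequality $w(1,t)=q-\xi<0$ in the unstable regime (and compatible with $\xi=q$ in the boundary case, where the lemma is trivial).
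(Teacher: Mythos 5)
Your overall strategy (a weighted parabolic maximum principle applied to $w=\psi-\tilde\psi_\xi$) is the right general idea, but the direct comparison with $\tilde\psi_\xi$ breaks down at a crucial point: $\tilde\psi_\xi$ is \emph{not} in $C^2([1,b])$. As noted in $\S$ \ref{subsec:initial-function-a-choice}, $\tilde\psi_\xi'(x)\to+\infty$ as $x\to 1^{+}$ (this is the degeneracy coming from $\xi^2-1-2\xi c_\xi=0$, i.e. the discriminant $\Delta(1)=0$), so $\tilde\psi_\xi'$ and $\tilde\psi_\xi''$ blow up near $x=1$. Consequently the first- and zeroth-order coefficients of your linearized equation, which necessarily involve $\tilde\psi_\xi'$ and $\tilde\psi_\xi''$, are unbounded near $x=1$; there is no finite $K>\sup|c|$, and the weight $e^{-Kt}$ cannot absorb the zeroth-order term. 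The paper's proof is built precisely to avoid this: it shows $\psi(\cdot,t)\le\tilde\psi_s$ for every $s\in(\xi,\xi')$, where $\|\tilde\psi_s\|_{C^2[1,b]}$ \emph{is} finite, derives the evolution equation for $H_s=\psi-\tilde\psi_s$ algebraically from the second-order ODE \eqref{equ3-gen dim} satisfied by $\tilde\psi_s$ (rather than by a mean-value linearization of $L_2$, which would require the interpolants $\psi_s$ to have $\theta_{\psi_s}$ avoiding $\pi$ and $2\pi$ for $n=3$ --- an issue you flag but do not resolve, and which the algebraic route sidesteps entirely), and then obtains $\psi\le\tilde\psi_\xi$ by letting $s\downarrow\xi$, using the monotonicity of $s\mapsto\tilde\psi_s$ from Lemma \ref{lem:monotonicity-psi-s-any-dim} together with $\psi_0\le\tilde\psi_\xi<\tilde\psi_s$. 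Note also that for $s>\xi$ the spatial boundary inequality at $x=1$ becomes strict, $H_s(1,t)=q-s<0$, which is what forces a putative positive maximum into the interior.

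Your fallback (first touching point plus strong maximum principle) does not close the gap either. Since $w(b,t)\equiv 0$ for all $t$, a first touching point may occur at $x=b$, where $Q(b)=0$ and the operator degenerates, so neither the interior strong maximum principle nor a Hopf boundary point lemma applies there; an extra barrier would be needed to exclude this. Moreover, the contradiction you extract at $x=1$ relies on the strict inequality $q-\xi<0$, and your dismissal of the case $\xi=q$ as ``trivial'' is incorrect: in the semistable case $\tilde\psi_q$ is still the boundary-singular branch of \eqref{eq:DHYM ODE solution on P^n blown up at one point} with $\tilde\psi_q'(1^+)=+\infty$, and the comparison $\psi\le\tilde\psi_q$ is exactly as nontrivial as in the unstable case. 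Both defects disappear once one compares with the regularized family $\tilde\psi_s$, $s>\xi$, as in the paper.
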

	
	\begin{proof}
		It is enough to show that $\psi(x,t) \leq \tilde\psi_s(x)$ for all $s\in(\xi, \xi')$. We let 
		$$H_s(x,t) := \psi(x,t) - \tilde\psi_s(x).$$ 
		Then $$\dot H_s =\dot\psi, ~ H'_s = \psi' - \tpsi'_s,~ H''_s = \psi'' - \tpsi''_s.$$ 
		But then from the evolution equation \eqref{eq:cotangent flow on P^n blowup at a point} we obtain the following evolution equation for $H_s$ on $(1, b]\times [0, \infty)$:
		\begin{align*}
			\dot H_s = \frac{Q\csc^2(\theta)}{(x^2+\psi^2)(1+ \psi'^2)}
			\Big( &(x^2+\psi^2)H''_{s} + (n-1)(1+ \psi'^2)(xH'_{s} - H_s) \\
			&+ (H_{s}+2\tilde\psi_s)H_{s}\tilde\psi''_s  + (n-1)H'_{s}(H'_s + 2\tilde\psi'_s)(x\tilde\psi'_s -\tilde\psi_s)\Big),
		\end{align*}
		where we used the fact that $\tilde\psi_s$ satisfies the following second order ODE:
		\begin{equation}
			\label{equ3-gen dim}
			(x^2+\tilde\psi_{s}^2)\tilde\psi''_s + (n-1)(1+ \tpsi'^2_s)(x\tilde\psi'_s -\tilde\psi_s)=0.
		\end{equation}
		At the spatial and time boundaries we observe that, 
		$$ H_s(x, 0) < 0, ~\forall x\in (1, b),~H_s(1,t)<0,~\forall t\in [0,\infty),~H_s(b,t) = 0,~\forall t\in [0,\infty).$$
		Arguing by contradiction, suppose there exists a $T>0$ such that $$H^*:=\underset{[1,b]\times[0,T]}{\sup} H_s(x,t) >0.$$ Note that $$a(x,t):= \frac{Q \csc^2 \theta}{(x^2 +\psi^2)(1+\psi'^2)}$$ is uniformly bounded above as the angle $\theta(x,t)$ lies in a compact set of $(0, \pi)$ and $Q=u''\circ u'^{-1}$. We also have that $||\tilde\psi_s||_{C^{2}[1,b]}$ is finite for $s>\xi$ (Note that this fails at $s = \xi$). Define a constant $A=A(T)>0$ by
		$$ A := 1+ \lVert \tilde\psi_s\rVert_{C^2[1,b]} + \underset{[1,b]\times[0,T]}{\sup} a(x,t).$$ We let $B= 1+A^2H^* + 2A^3$ and consider the function $F(x,t):= e^{-Bt}H_s(x,t)$. First we see that
		$$ F^* := F(x^*,t^*) = \underset{[1,b]\times[0,T]}{\sup} F(x,t) >0$$ for some $(x^*,t^*)\in[1,b]\times[0,T]$. Next, the spatial and time boundary conditions implies that $x^*\in(1,b)$ and $t^*\neq 0$. It implies that $$\dot F(x^*,t^*)\geq0,~~F'(x^*,t^*)=0,~~F''(x^*,t^*) \leq 0.$$ Then by the maximum principle at the point $(x^*,t^*)$ we get
		\begin{align*}
			0 &\leq \dot F = -BF^* + e^{-B t^*} \dot H_s \\
			&= -BF^*+ a(x^*,t^*)\Big((x^2+\psi^2)F'' + (n-1)(1+ \psi'^2)(xF'-F)
			+ F^*(H_s+2\tilde\psi_s)\tilde\psi''_s  \\ 
			&\hspace*{3.5cm} + (n-1)(H'_s + 2\tilde\psi'_s )(x\tilde\psi'_s -\tilde\psi_s)F'(x^*, t^*)\Big)\\
			&\leq -BF^* + a(x^*,t^*)\Big(H(x^*, t^*)+2\tilde\psi_s(x^*)\Big)\tilde\psi''_s(x^*)F^* \\
			& \leq -BF^* +(A^2H^* + 2A^3)F^* = -F^*,
		\end{align*}
		which is a contradiction. Hence, we must have $$\underset{[1,b]\times[0,\infty)}{\sup} H_s(x,t)\leq 0,$$ and so we get the desired result.
	\end{proof}
	
	\begin{lem}\label{lem:dhym-calabi-monotonicity-gen dim} 
		Suppose the second assumption of \eqref{eq:comparison and monotonicity for initial function} holds. For all $t\in [0,\infty)$ and all $x\in [1,b]$, we have $\dot\psi(x,t) \geq 0$ along the cotangent flow.
	\end{lem}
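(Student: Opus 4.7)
The plan is to set $w(x,t) := \dot\psi(x,t)$, differentiate the flow equation \eqref{eq:cotangent flow on P^n blowup at a point} in $t$, and invoke the parabolic minimum principle in the same spirit as the proof of Lemma \ref{lem:comparison-to-limit-gen dim}. Writing the quasilinear operator $L_2(\psi) = a(x,\psi,\psi')\,\psi'' + b(x,\psi,\psi')$ with
\begin{equation*}
a(x,\psi,\psi') = \frac{Q(x)\csc^2\theta}{1+\psi'^2} > 0 \text{ on } (1,b),
\end{equation*}
and differentiating $\dot\psi = L_2(\psi)$ in $t$ produces a linear parabolic equation
\begin{equation*}
\dot w \;=\; a(x,\psi,\psi')\,w'' \,+\, d(x,t)\,w' \,+\, c(x,t)\,w,
\end{equation*}
where the lower-order coefficients $d = a_{\psi'}\psi'' + b_{\psi'}$ and $c = a_{\psi}\psi'' + b_{\psi}$ arise from the partial derivatives of $a,b$ evaluated along the flow.

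The boundary and initial data for $w$ are nonnegative. Since $\psi(1,t) \equiv q$ and $\psi(b,t) \equiv p$ are constant in $t$, one has $w(1,t) = w(b,t) = 0$; and the initial value is
\begin{equation*}
w(x,0) \;=\; L_2(\psi_0)(x) \;=\; Q(x)\,(\cot\theta_{\psi_0})'(x) \;\geq\; 0
\end{equation*}
by the second half of \eqref{eq:comparison and monotonicity for initial function} together with $Q \geq 0$. Moreover, since $\theta(x,t)$ remains in a compact subset of $(0,\pi)$ along the flow (the condition $\theta<\pi$ is preserved by \cite[Theorem 1.2]{FYZ}) and $\psi(\cdot,t)$ has uniform $C^2$-bounds on $[1,b]\times[0,T]$ for any finite $T$, the coefficients $c$ and $d$ are bounded on $[1,b]\times[0,T]$.

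Arguing by contradiction, suppose $\inf_{[1,b]\times[0,T]} w < 0$ for some $T > 0$. Following the barrier trick used in Lemma \ref{lem:comparison-to-limit-gen dim}, set $F(x,t) := e^{-Bt}w(x,t)$ with $B > \sup_{[1,b]\times[0,T]} c(x,t)$ to be chosen. The infimum of $F$ is negative, and by the boundary and initial bounds above, it is attained at some $(x^*, t^*)$ with $x^* \in (1,b)$ and $t^* \in (0, T]$. At this point $\dot F(x^*,t^*) \leq 0$, $F'(x^*,t^*) = 0$, $F''(x^*,t^*) \geq 0$, and the transformed equation $\dot F = (c-B)F + aF'' + dF'$ yields
\begin{equation*}
0 \;\geq\; \dot F(x^*,t^*) \;\geq\; (c(x^*,t^*) - B)\,F(x^*,t^*) \;>\; 0,
\end{equation*}
since $F(x^*,t^*) < 0$ and $c - B < 0$. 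This contradiction forces $w \geq 0$ on $[1,b]\times[0,T]$, and as $T$ is arbitrary, $\dot\psi(x,t) \geq 0$ for all $t \geq 0$ and $x \in [1,b]$.

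The main obstacle is the bookkeeping of the linearization and verifying that $c, d$ remain bounded: this ultimately depends on uniform $C^2$-bounds for $\psi$ along the flow, and on $\csc\theta$ remaining bounded, both of which follow from the preservation of $\theta < \pi$ under the cotangent flow established in \cite{FYZ}. The degeneracy of the leading coefficient $a$ at $x \in \{1, b\}$ is harmless since the contradiction point $(x^*,t^*)$ is interior.
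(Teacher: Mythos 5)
Your proposal is correct and follows essentially the same route as the paper: differentiate the flow equation in $t$, observe that $\dot\psi$ vanishes on the spatial boundary and is nonnegative at $t=0$ by the second condition of \eqref{eq:comparison and monotonicity for initial function}, and apply the parabolic minimum principle to $e^{-Bt}\dot\psi$ with $B$ dominating the zeroth-order coefficient. The only differences are cosmetic (notation for the linearized coefficients, and deriving the boundary vanishing from the Dirichlet conditions on $\psi$ rather than from $Q(1)=Q(b)=0$).
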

	
	\begin{proof}
		At the boundary points, since $Q(1) = Q(b) = 0$, we have $\dot\psi(1,t) = \dot\psi(b,t) =0$ for all $t$. Taking one time derivative of the evolution equation \eqref{eq:cotangent flow on P^n blowup at a point} we see that $\dot\psi$ satisfies the following equation: $$\ddot\psi = L(\dot\psi) + a\dot\psi,$$ where $L(f) := {\bf A}(x,t)f'' + {\bf B}(x,t)f'$,
		$$\begin{cases}\mathbf{A}(x,t) &:= \frac{Q(x)\csc^2(\theta)}{1+\psi'^2},\\
			\mathbf{B}(x,t) &:= Q(x)\csc^2(\theta) \Big(-2\frac{\psi'\psi''}{(1 + \psi'^2)^2} +(n-1)\frac{x}{x^2 + \psi^2}\Big) + \frac{\cot\theta}{1 + \psi'^2}\dot\psi,\\
			a(x,t) &:= Q(x)\csc^2(\theta)(n-1) \Big(-\frac{1}{x^2 + \psi^2} -2\frac{\psi( x\psi' -\psi)}{(x^2 + \psi^2)^2}\Big) + (n-1)\frac{x \cot\theta}{x^2 + \psi^2} \dot\psi. 
		\end{cases}$$
		Arguing by contradiction, suppose there exists a $T>0$ such that $$\inf_{[1,b]\times[0,T]}\dot\psi(x,t)<0.$$ Let $C = C(T)>0$ such that $$C = 1+\sup_{[1,b]\times[0,T]}|a(x,t)|.$$ We now let $F(x,t) = e^{-Ct}\dot\psi(x,t),$ and let $(x^*,t^*) \in [1,b]\times[0,T]$ such that $$F^*:= F(x^*,t^*) = \inf_{[1,b]\times[0,T]} F(x,t) <0.$$ Then by the boundary conditions, $x^*\in (1,b)$ and $t^*\neq 0$. It implies that $$\dot F(x^*,t^*)\leq0,~~F'(x^*,t^*)=0,~~F''(x^*,t^*) \geq 0.$$  By the maximum principle at the point $(x^*,t^*)$ we get 
		\begin{align*}
			0 \geq \dot F(x^*,t^*) = -C F^* + L(F)(x^*,t^*) + a(x^*,t^*)F^* \geq (a(x^*,t^*)-C)F^* \geq 0
		\end{align*}
		because $a(x^*,t^*) - C\leq -1$ and $F^*<0$, and so we have a contradiction. Therefore, $$\inf_{[1,b]\times[0,\infty)}\dot\psi(x,t)\geq 0,$$
		and so we get the desired result.
	\end{proof}

\vspace*{3mm}
%--------------------------------------
\subsection{A choice of initial function}
\label{subsec:initial-function-a-choice}
	
We choose an initial function $\psi_0:[1,b] \to \RR$ defined as 
	\begin{equation}\label{eq:initial function-dim 3}
		\psi_0(x):= \frac{1}{\sqrt{3}}\sqrt{\mu x^2 + \frac{\la}{x}},
	\end{equation}
	where the constants $\mu$ and $\la$ are given by
	$$\mu = \frac{3(p^2b - q^2)}{b^3-1},~~ \la = 3q^2 - \mu = \frac{3b(q^2b^2 -p^2)}{b^3-1}. $$ 
	Note that $\mu-1>0$  because we have $3q^2 < 3p^2b -b^3 + 1$, and the function $\psi_0$ is well-defined since $\mu x^3 + \la \geq \mu+\la = 3q^2$. Also note $\la<0$ if $q<\frac{p}{b}$. It is easy to see that $\psi_0(1)=q$, $\psi_0(b)=p$. Since $3\psi_0^2 x - x^3 = (\mu-1)x^3 +\la$, we also have $\theta_{\psi_0}<\pi$ (by Lemma \ref{lem:supercritical equiv to positivity and monotonicity_blowup of P n}). Moreover, by straightforward computation we get
	\begin{align*}
		(\cot\theta_{\psi_0})' &= \frac{\csc^2(\theta_{\psi_0})}{(1 + {\psi'_0}^2)(x^2+\psi_0^2)} \left((x^2+\psi_0^2)\psi''_0 + 2(x\psi_0 -\psi_0)(1 + {\psi'_0}^2)\right) \\ &= \frac{9\la^2 (\mu-1)}{4\sqrt{3} x^2}\left(\mu x^2 + \frac{\la}{x}\right)^{-\frac{3}{2}} > 0,
	\end{align*}
	and in particular, from the flow equation \eqref{eq:cotangent flow on P^n blowup at a point} we get that $\dot\psi(x,0)\geq 0$. The remaining thing is to show that the initial function $\psi_0$ given by \eqref{eq:initial function-dim 3} also satisfies $\psi_0 \leq \tpsi_\xi$ on $[1,b]$. For that, we consider the following function for each $q\leq s < p_{\star}$: 
	$$\psi^{(s)}(x):= \frac{1}{\sqrt{3}}\sqrt{\mu^{(s)} x^2 + \frac{\la^{(s)}}{x}},$$ 
	where the constants $\mu^{(s)}$ and $\la^{(s)}$ are given by
	$$\mu^{(s)} = \frac{3(p^2b - s^2)}{b^3-1},~~ \la^{(s)} = 3s^2 - \mu^{(s)}. $$ 
	Note that at $s=q$ the function $\psi^{(s)}$ is exactly the initial function $\psi_0$. We first observe that $\psi^{(s)}$ is increasing in $s$, and in particular, $\psi_0 \leq \psi^{(\xi)}$. Indeed, 
	\begin{align*}
		\mu^{(s)} x^2 + \frac{\la^{(s)}}{x} &= \mu^{(s)}\frac{x^3-1}{x} + \frac{3s^2}{x} \\
		&= \frac{3p^2b(x^3-1)}{(b^3-1)x} + \frac{3s^2(b^3-x^3)}{x(b^3-1)},
	\end{align*}
	and the right hand side is increasing in the variable $s$.

	\vspace*{1mm}
	We next show that $\psi^{(\xi)}\leq \tpsi_{\xi}$ on $[1,b]$ for the choice $p=2bq$ with $b=q=3$. Note that $\psi^{(\xi)}(1)=\tpsi_\xi(1)=\xi$ and  $\psi^{(\xi)}(b)=\tpsi_\xi(b)=p$. Recall that the functions $\tpsi_\xi$ and $\psi^{(\xi)}$ satisfies the following cubic and quadratic polynomials respectively in the variable $y$ with coefficients depending on $x$:
	\begin{align*}
		&y^3 - 3 c_{\xi}xy^2 -3x^{2}y + c_{\xi}x^3 - A_{\xi} = 0, \\
		& 3xy^2 -(\mu^{(\xi)}x^3 + \la^{(\xi)}) = 0.
	\end{align*}
	By straightforward computation, the {\em resultant} of the above polynomials is the following:
	\begin{align*}
		\sR(x) &= (\mu^{(\xi)}x^3 + \la^{(\xi)})^{2}\left((27c_{\xi}^2 + 18 - \mu^{(\xi)})x^3 - \la^{(\xi)}\right) + 27 x^3 (c_{\xi}x^3 - A_{\xi})^2 \\
		&\hspace*{2cm}  -27x^3(\mu^{(\xi)}x^3 + \la^{(\xi)})\left((2c_{\xi}^2 + 3)x^3 - 2c_{\xi}A_{\xi}\right).
	\end{align*}
	The constants $\mu^{(\xi)}$, $\la^{(\xi)}$, $c_{\xi}$ and $A_{\xi}$ have the following expression in terms of $\xi$:
	\begin{equation}\label{eq:values-const-in-terms-of-xi}
		\begin{split}
			&\mu^{(\xi)} = \frac{3(p^2b -\xi^2)}{b^3-1}, ~~ \la^{(\xi)} = 3\xi^2 - \mu^{(\xi)} = \frac{3b(\xi^2 b^2 - p^2)}{b^3 -1}, ~~ c_{\xi} = \frac{\xi^2 -1}{2\xi}, \\ & A_{\xi} = \xi^3 - 3\xi -c_{\xi}(3\xi^2 -1) = -\frac{(\xi^2 +1)^2}{2\xi}.
		\end{split}
	\end{equation}
	Note that $\mu^{(\xi)}>1$, $\la^{(\xi)}<0$ and $A_{\xi}<0$. Using $\mu^{(\xi)} + \la^{(\xi)}=3\xi^2$ one can check that $\sR(1)=0$. Moreover, using $\mu^{(\xi)} b^3 + \la^{(\xi)}=3p^2b$ we also have $\sR(b)=0$ since $\xi$ is a root of the polynomial $F$, i.e. it satisfies the following:
	$$ (\xi^2+1)^2 = (3p^2b -b^3)(\xi^2 -1) - 2\xi p(p^2 -3b^2).$$

	Considering $\tau:=x^3$ in the above polynomial $\sR$ we obtain the following cubic polynomial in the variable $\tau$:
	\begin{align*}
		\sR_{0}(t) &= (\mu^{(\xi)} \tau +\la^{(\xi)})^{2}\left((27c_{\xi}^2 + 18 - \mu^{(\xi)})\tau - \la^{(\xi)}\right) + 27\tau(c_{\xi}\tau - A_{\xi})^2 \\ & ~~~~~~  -27\tau(\mu^{(\xi)} \tau + \la^{(\xi)})\left((2c_{\xi}^2 + 3)\tau - 2c_{\xi}A_{\xi}\right).
	\end{align*}
	We just have seen that two roots of the polynomial $\sR_{0}$ are $\tau=1$ and $\tau=b^3$. By straightforward factorization, the third real root of the polynomial $\sR_{0}$ is given by 
	\begin{equation}\label{eq:third-real-root-expression}
		\tau = -b^3 -1 - \frac{54c_{\xi}(\mu^{(\xi)}-1)(\la^{(\xi)} c_{\xi} + A_{\xi}) - 3\la^{(\xi)} (\mu^{(\xi)}-3)(\mu^{(\xi)}-9)}{27c_{\xi}^2(\mu^{(\xi)}-1)^2 -\mu^{(\xi)}(\mu^{(\xi)}-9)^2}.
	\end{equation}
	Now using \eqref{eq:values-const-in-terms-of-xi}, for the special values $q=b=3$ and $p=2bq=18$, this third root is negative. It implies that the functions $\tpsi_{\xi}$ and $\psi^{(\xi)}$ do not intersect on $(1,b)$. Since $(\psi^{(\xi)})'(1)= \frac{2\xi^2 -\la^{(\xi)}}{2\xi}>0$ and $\tpsi_{\xi}'(x)\to +\infty$ as $x\to 1^{+}$, we must have $\psi^{(\xi)}<\tpsi_{\xi}$ on $(1,b)$ for the above choice of $b,p,q$. We point out that this special choice gives us the first kind of dHYM instability (see $\S$ \ref{subsec:examples-dhym-unstab-types}). The discussions above can be summarized as follows.
	
\begin{lem}\label{lem:initial-func-less-than-limit}
For the special values $q=b=3$ and $p=2bq=18$ we have $\psi^{(\xi)}\leq \tpsi_{\xi}$ on $[1,b]$. In particular, the initial function $\psi_0$ given by \eqref{eq:initial function-dim 3} satisfies $\psi_0 \leq \tpsi_\xi$ on the interval $[1,b]$.
\end{lem}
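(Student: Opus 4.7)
The plan is to follow the resultant approach sketched in the paragraphs just before the statement. First I would observe that $\psi^{(\xi)}$ is the positive root of the quadratic $3xy^2 - (\mu^{(\xi)}x^3 + \la^{(\xi)}) = 0$, while $\tpsi_\xi$ is a root of the cubic $y^3 - 3c_\xi x y^2 - 3x^2 y + c_\xi x^3 - A_\xi = 0$. A coincidence $\psi^{(\xi)}(x) = \tpsi_\xi(x)$ at some $x \in [1,b]$ forces these two polynomials in $y$ to share a root, so the resultant $\sR(x)$ must vanish at $x$. Since $\sR$ depends on $x$ only through $\tau := x^3$, finding intersection points of the two curves reduces to locating the zeros in $\tau\in [1, b^3]$ of the cubic $\sR_0(\tau)$ displayed in the excerpt.

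Next I would verify that $\sR_0(1) = 0$ and $\sR_0(b^3) = 0$, using respectively the identities $\mu^{(\xi)} + \la^{(\xi)} = 3\xi^2$ and $\mu^{(\xi)} b^3 + \la^{(\xi)} = 3p^2 b$, together with the defining relation $F(\xi) = 0$, i.e.\ $(\xi^2+1)^2 = (3p^2 b - b^3)(\xi^2 - 1) - 2\xi p(p^2 - 3b^2)$; this second identity is exactly what is needed for the constant term of $\sR_0$ at $\tau = b^3$ to cancel. Dividing out the factors $(\tau - 1)$ and $(\tau - b^3)$, the third real root $\tau_\star$ of $\sR_0$ is given by the expression in \eqref{eq:third-real-root-expression}, and the crux of the lemma is to show $\tau_\star < 0$ for the specific choice $q=b=3$, $p=18$.

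To carry out this numerical check I would specialize the constants using \eqref{eq:values-const-in-terms-of-xi}: for $q=b=3$, $p=18$, the constant $c_q$ becomes $5328/2863$, the polynomial $F$ takes the explicit form displayed in Theorem \ref{thm:dhym-unstab-P3-blowup-cot-flow-an-example}, and $\xi$ is its unique root in $(3,6)$. Each of $\mu^{(\xi)}, \la^{(\xi)}, c_\xi, A_\xi$ is then an explicit rational function of $\xi$, and by bounding $\xi$ in a sufficiently narrow subinterval of $(3,6)$ (via sign changes of $F$) I would verify that the numerator of \eqref{eq:third-real-root-expression} is of such sign, and the denominator of such sign, that $\tau_\star + b^3 + 1 > 0$, hence $\tau_\star < -(b^3+1) < 0$. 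This rules out any intersection of $\psi^{(\xi)}$ and $\tpsi_\xi$ on the open interval $(1,b)$.

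Finally, to decide which of $\psi^{(\xi)}, \tpsi_\xi$ is larger, I would compare their derivatives at $x=1$. Differentiating the quadratic for $\psi^{(\xi)}$ gives the finite value $(\psi^{(\xi)})'(1) = (2\xi^2 - \la^{(\xi)})/(2\xi)$, while from the depressed cubic \eqref{eq:depressed-cubic-blowup-P3} the discriminant $\Delta(x)$ vanishes to first order at $x=1$, so $\tpsi_\xi'(x) \to +\infty$ as $x \to 1^+$. Hence $\tpsi_\xi > \psi^{(\xi)}$ in a right-neighborhood of $1$, and by the non-crossing established above this inequality persists on $[1,b]$. Combined with the monotonicity $\psi_0 = \psi^{(q)} \leq \psi^{(\xi)}$ (already shown in the excerpt), this yields $\psi_0 \leq \tpsi_\xi$ on $[1,b]$. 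The main obstacle is the sign verification for $\tau_\star$: it is a finite but delicate computation in $\xi$, and I would handle it by pinning $\xi$ down to a short enough subinterval of $(3,6)$ that the sign of the numerator and denominator of \eqref{eq:third-real-root-expression} can be determined unambiguously.
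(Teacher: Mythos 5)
Your proposal is correct and follows essentially the same route as the paper: the resultant $\sR_0(\tau)$ with roots $\tau=1$, $\tau=b^3$ and a third negative root rules out crossings of $\psi^{(\xi)}$ and $\tpsi_\xi$ on $(1,b)$, the derivative comparison at $x=1$ (finite slope versus $\tpsi_\xi'\to+\infty$) fixes the order, and monotonicity of $s\mapsto\psi^{(s)}$ gives $\psi_0\leq\psi^{(\xi)}$. The only step the paper likewise leaves implicit is the numerical sign check of the third root for $q=b=3$, $p=18$, which you propose to handle by localizing $\xi$ in a short subinterval of $(3,6)$; that is an adequate (indeed slightly more explicit) treatment of the same computation.
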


\vspace*{3mm}
%%%%%%%%%%%%%%%%%%%%%%%%%%%%%%%%%%%%%%
\subsection{Proof of the convergence}
	
We are now in a position to prove the Theorem \ref{thm:singular-dhym-P3 blowup-cotangnet flow_Introduction}. In fact, we prove the convergence result of the cotangent flow on $X=\Bl_{x_0}\PP^3$ by generalizing the arguments of Proposition $4.11$ in \cite{DMS24}. 
	
	\begin{prop}
		\label{prop:conv-cotangent-flow-calabi-psi}
		Let $X=\Bl_{x_0}\PP^3$ with fixed K\"ahler class $\be=b[H]-[E]$ and a real $(1,1)$-class $\al=p[H]-q[E]$. Suppose $0<q \leq c_q + \sqrt{c_{q}^{2} +1}$ (i.e. $(X,\al,\be)$ is not dHYM stable). Let $\psi(x,t)$ solve the cotangent flow \eqref{eq:cotangent flow on P^n blowup at a point} with $\psi(x,0) = \psi_0(x)$ satisfies the assumption \eqref{eq:comparison and monotonicity for initial function}. Then $\psi(x,t)$ converges uniformly, and in $C^1_{loc}$ on $(1,b]$, to the function $\tilde\psi_\xi$. 
	\end{prop}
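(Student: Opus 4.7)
The plan is to combine Lemmas \ref{lem:comparison-to-limit-gen dim} and \ref{lem:dhym-calabi-monotonicity-gen dim} to produce a monotone pointwise limit of $\psi(x,t)$, upgrade the convergence via interior parabolic regularity, and then identify the limit as $\tpsi_\xi$ using the structure of the $\xi$-branch developed in Section \ref{section:aux family on P n blowup}.

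First, for each fixed $x\in[1,b]$ the two lemmas force $t\mapsto\psi(x,t)$ to be non-decreasing and pinched between $\psi_0(x)$ and $\tpsi_\xi(x)$, so a pointwise limit $\psi_\infty(x):=\lim_{t\to\infty}\psi(x,t)$ exists, satisfies $\psi_0\le\psi_\infty\le\tpsi_\xi$, and in particular $\psi_\infty(b)=p$; integrating the flow equation also yields $\int_0^\infty\dot\psi(x,t)\,dt=\psi_\infty(x)-\psi_0(x)<\infty$ for each $x$. Next I would establish uniform-in-$t$ interior regularity for $\psi(\cdot,t)$ away from $x=1$: on any compact $K\Subset(1,b]$ the PDE \eqref{eq:cotangent flow on P^n blowup at a point} is strictly parabolic because $Q>0$ on $K$, the coefficient $\csc^2\theta$ stays bounded since the supercritical condition $\theta_{\psi(\cdot,t)}<\pi$ is preserved by the flow (\cite{FYZ}), and the pinching $\psi_0\le\psi\le\tpsi_\xi$ together with $\psi>0$ from Lemma \ref{lem:supercritical equiv to positivity and monotonicity_blowup of P n} confines $\arccot(\psi/x)$ to a compact subset of $(0,\pi/2)$. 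Standard parabolic Schauder and Krylov--Safonov estimates then yield uniform $C^k_{loc}$ bounds on $K$ independent of $t$. By Arzela--Ascoli a subsequence $t_j\to\infty$ gives $\psi(\cdot,t_j)\to\psi_\infty$ in $C^k_{loc}(1,b]$; the $L^1_t$ bound on $\dot\psi$ combined with the $C^k$ control forces $\dot\psi(\cdot,t_j)\to 0$, and thus $\psi_\infty$ is a smooth stationary solution of the level-set ODE \eqref{eq:family of dHYM solution on P n blowup} on $(1,b]$ with $\psi_\infty(b)=p$ and some constant $c$.

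The key step is then to identify $\psi_\infty=\tpsi_\xi$. Set $s:=\lim_{x\to 1^+}\psi_\infty(x)\in[q,\xi]$; then $(\psi_\infty,c)$ is the branch of the level set of $\re(w+\ii x)^3-c\im(w+\ii x)^3$ passing through $(b,p)$ and limiting to $s$ as $x\to 1^+$, i.e.\ $\psi_\infty=\tpsi_s$ and $c=c_s$ in the notation of Section \ref{section:aux family on P n blowup}. Existence of this branch on $(1,b]$ with the correct endpoints forces $(X,\al_s,\be)$ to be dHYM semistable, hence $s\ge\xi$ by the definition of $\xi$; combining with the bound $s\le\xi$ inherited from $\psi_\infty\le\tpsi_\xi$ (and the strict $s$-monotonicity of $\{\tpsi_s\}$ from Lemma \ref{lem:monotonicity-psi-s-any-dim}) gives $s=\xi$, and then the uniqueness of the $\xi$-branch established via the discriminant of \eqref{eq:depressed-cubic-blowup-P3} gives $\psi_\infty=\tpsi_\xi$ on $(1,b]$. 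Uniqueness of the limit promotes subsequential to full-family $C^k_{loc}$ convergence on $(1,b]$, and Dini's theorem applied to the monotone family $\{\psi(\cdot,t)\}$ with continuous limit $\tpsi_\xi$ upgrades this to uniform convergence on compact subsets of $(1,b]$.

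The main obstacle is precisely this identification $s=\xi$: one must rule out that $\psi_\infty$ arrives at a left-endpoint value strictly in $(q,\xi)$, and this is where the polynomial analysis of $F$ from Section \ref{section:aux family on P n blowup}---the instability of $(X,\al_s,\be)$ for $s\in(q,\xi)$, $\xi$ being the unique zero of $F$ in $(q,\xi')$, and $c_s$ being maximized at $s=\xi$---enters in an essential way.
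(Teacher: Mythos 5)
Your overall architecture --- a monotone pointwise limit pinched between $\psi_0$ and $\tpsi_\xi$, identification of the limit as a member of the family $\{\tpsi_s\}$, and the endgame $s\ge\xi$ (semistability at the left endpoint) versus $s\le\xi$ (from the comparison lemma) --- matches the paper's, and your final identification logic is sound. The genuine weak point is the middle step. You extract the stationary equation for the limit by asserting uniform $C^k_{loc}$ bounds on compact $K\Subset(1,b]$ from ``standard parabolic Schauder and Krylov--Safonov estimates.'' But the coefficient of $\psi''$ in \eqref{eq:cotangent flow on P^n blowup at a point} is $Q\csc^2\theta/(1+\psi'^2)$, so uniform parabolicity on $K$ requires a uniform \emph{upper} bound on $\psi'$, and neither the supercriticality bound \eqref{eq:upper-lower-bdd-Lag-op} nor the pinching $\psi_0\le\psi\le\tpsi_\xi$ provides one: $\theta\ge\theta_{\min}>0$ only bounds $\arccot(\psi')$ from above, i.e.\ $\psi'$ from below, and indeed the expected limit has $\tpsi_\xi'\to+\infty$ as $x\to1^+$. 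A gradient estimate must be established before Schauder theory applies (the paper defers exactly this to the machinery of \cite{CJY, Tak-TLPF, FYZ} in a remark \emph{after} the proof), so your regularity step is not ``standard'' as stated and is where your argument currently has a hole.

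The paper's proof avoids this issue entirely in the identification step. Writing $c(x,t)=\cot\theta(x,t)$, the flow reads $\dot\psi=Qc'$, so $c$ is nondecreasing in $x$ by Lemma \ref{lem:dhym-calabi-monotonicity-gen dim}, and $\int_T^\infty\bigl(c(b-\delta,t)-c(1+\delta,t)\bigr)\,dt$ is controlled by $\int\int\dot\psi/Q<\infty$; this yields a sequence $t_j$ along which $c(\cdot,t_j)$ converges locally uniformly to a constant $c_\infty$, and then integrating $(\re(\psi+\ii x)^3)'-c_\infty(\im(\psi+\ii x)^3)'=(c-c_\infty)(\im(\psi+\ii x)^3)'$ against the bounded total variation of $\im(\psi+\ii x)^3$ shows the limit satisfies the level-set equation, using nothing beyond $C^0$ information. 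If you adopt this device your argument closes; otherwise you must actually supply the gradient and higher-order estimates you invoke. Two smaller points: Dini's theorem applies on all of $[1,b]$ since $\tpsi_\xi$ is continuous there, giving the full uniform convergence claimed rather than only on compacts of $(1,b]$; and your assertion that existence of the continuous branch forces semistability of $(X,\al_s,\be)$ needs the limiting inequality $\re(s+\ii)^2-c_s\im(s+\ii)^2\ge0$ as $x\to1^+$, which is essentially the paper's ``$x\psi_\infty$ is increasing'' step via Lemma \ref{lem:supercritical equiv to positivity and monotonicity_blowup of P n}.
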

	
	\begin{proof}
		By the comparison and monotonicity from $\S$ \ref{subsec:dhym-key-lem}, the point-wise limit $\psi_\infty(x) := \lim_{t\rightarrow\infty}\psi(x,t)$ exists, and it is a lower semi-continuous function on $[1,b]$. Moreover,  $\psi_\infty\leq \tilde\psi_\xi$ on $[1,b]$ with $\psi_\infty(b) = p$, $\psi_\infty(1)=q$. Also, $x\psi_\infty(x)$ is increasing on $[1,b]$ (by the Lemma \ref{lem:supercritical equiv to positivity and monotonicity_blowup of P n} and equation \eqref{eq:upper-lower-bdd-Lag-op} below). We let $$s_0 := \lim_{x\rightarrow 1^+}\psi_\infty(x).$$ 
		Since $\theta_\om(\chi_0)<\pi$ (by the assumption \eqref{eq:comparison and monotonicity for initial function}) and this condition is preserved along the flow (cf. \cite[Lemma 3.2]{FYZ}), we get
		\begin{equation}\label{eq:upper-lower-bdd-Lag-op}
			0 < \inf_{x\in[1,b]}\theta(x,0) \leq \theta(x,t) \leq \sup_{x\in[1,b]}\theta(x,0) < \pi
		\end{equation}
		for all $(x,t)\in [1,b]\times [0,\infty)$. It implies
		\begin{equation}\label{eq:im-k-equals-n-positive-supcrit-equiv}
			\left(\im (\psi + \ii x)^{3}\right)' = 3 (\psi^2 + x^2)({\psi'}^2 +1)^{\frac{1}{2}} \sin (\theta(x,t)) >0.
		\end{equation}
		Moreover, setting $c(x,t) := \cot\theta(x,t)$, we have $$\left(\re(\psi + \ii x)^3\right)' = c(x,t) \left(\im(\psi + \ii x)^3\right)'.$$
		We now prove the proposition through several claims.
		\begin{claim}
			There exists a sequence $t_j\rightarrow \infty$ and $c_\infty\in\RR$ such that $c(x,t_j)\rightarrow c_\infty$ for each $x\in (1,b)$ with the convergence being uniform on any compact subset $K\subset (1,b)$.
		\end{claim}
		\begin{proof}
			By the flow equation \eqref{eq:cotangent flow on P^n blowup at a point} and $\dot\psi\geq 0$, we have $c'(x,t)\geq 0$, and so $c(x,t)$ is increasing in $x$. Moreover, $c(x,t)$ is a bounded function of $(x,t)$ since $\theta(x,t)$ is uniformly bounded on $(0,\pi)$. Let us now fix a $0<\delta\ll 1$ and time $T>0$. Then  $$\int_{T}^{\infty}\int_{1+\delta}^{b-\delta}\Big|\frac{\partial \psi}{\partial t}\Big| = \int_{1+\delta}^{b-\delta}\Big(\psi_\infty(x) - \psi(x,T)\Big)\,dx \leq C$$ for some uniform constant $C$ independent of $T$. Now, since $Q(x)$ is uniformly lower bounded away from zero on $[1+\delta,b-\delta]$ we see that there exists a constant $C_\delta$ independent of $T$ such that 
			\begin{align*}
				&\int_T^\infty \Big(c(b-\delta,t) - c(1+\delta,t)\Big)\,dt \\ &= \int_T^\infty\int_{1+\delta}^{b-\delta}c'(x,t)\,dx dt \\ &= \int_T^\infty\int_{1+\delta}^{b-\delta} \frac{1}{Q}\frac{\d \psi}{\d t} dx dt \\ &< C_\delta.
			\end{align*}
			So there exists a sequence $t_j\rightarrow\infty$ such that $$\lim_{j\rightarrow\infty}\Big(c(b-\delta,t_j) - c(1+\delta,t_j)\Big) = 0.$$ 
			By passing to a subsequence we may assume that $c(1+\delta,t_j)\rightarrow c_\infty$. By monotonicity we then have that $c(x,t_j)\rightarrow c_\infty$ for all $x\in [1+\delta,b-\delta]$. Then a simple diagonalisation argument implies that $c_\infty$ is independent of $\delta$ and the claim is proved.
		\end{proof}
		
		\begin{claim}
			For all $x\in (1,b)$ we have $$\re(\psi_\infty + \ii x)^3 - c_\infty \im(\psi_\infty + \ii x)^3 = A_\infty$$ for some constant $A_\infty = \re(s_0 + \ii)^3 - c_\infty \im(s_0 + \ii)^3$ where $s_0=\psi_\infty(1+)$.
		\end{claim}
		\begin{proof}
			For the sequence $\{t_j\}$ in the previous claim, we let
			\begin{align*}
				h_j(x) := \re(\psi(x,t_j) + \ii x)^3 - c_\infty \im(\psi(x,t_j) + \ii x)^3, 
			\end{align*}
			and we denote it's limit by
			$$ h(x) := \re(\psi_\infty(x) + \ii x)^3 - c_\infty \im(\psi_\infty(x) + \ii x)^3.$$
			Once again let $0<\delta\ll 1$ and take any $x\in[1+\del, b-\del]$. Then 
			\begin{align*}
				&\lvert h_j(x) - h_j(1+\del)\rvert = \left\lvert\int_{1+\del}^{x} h'_j(x) dx \right\rvert \\
				&= \left\lvert \int_{1+\del}^{x} \left(\re(\psi(x,t_j) + \ii x)^3\right)' dx -c_\infty \int_{1+\del}^{x} \left(\im(\psi(x,t_j) + \ii x)^3\right)' dx \right\rvert \\
				&\leq \int_{1+\del}^{x}  \left\lvert(c(x,t_j) - c_\infty)\right\rvert \left(\im(\psi(x,t_j) + \ii x)^3\right)' dx \\
				&\leq \sup_{x\in[1,b]}\left\lvert(c(x,t_j) - c_\infty)\right\rvert \int_{1+\del}^{x} \left(\im(\psi(x,t_j) + \ii x)^3\right)' dx \\
				&= \sup_{x\in[1,b]}\left\lvert(c(x,t_j) - c_\infty)\right\rvert \left(\im(\psi(x,t_j) + \ii x)^3 - \im(\psi(1+\del,t_j) + \ii (1+\del))^3\right) \\
				&\leq C \sup_{x\in[1,b]}\left\lvert(c(x,t_j) - c_\infty)\right\rvert,
			\end{align*} 
			for some uniform constant $C>0$ using Eq. \eqref{eq:im-k-equals-n-positive-supcrit-equiv} (or by the Lemma \ref{lem:supercritical equiv to positivity and monotonicity_blowup of P n}). Letting $j\rightarrow \infty$, it implies that $h(x) = h(1+\del)$ for all $\delta>0$ small and for all $x\in[1+\del, b-\del]$. Therefore, $h(x)=A_\infty$ for all $x\in(1,b)$.
		\end{proof}
		
		\begin{claim}
			Finally we prove that $\psi_\infty \equiv \tilde\psi_\xi$.
		\end{claim}
		\begin{proof}
			Firstly,  note that since $\psi_\infty(1+)= s_0$ and $\psi_\infty(b) = p$, we see that $\psi_\infty \equiv\tilde\psi_{s_0}.$ Next, since $x\psi_\infty$ is increasing,  we have $s_{0}\geq \xi$. Next, by Lemma \ref{lem:comparison-to-limit-gen dim} we have that $\psi_\infty\leq \tilde\psi_\xi$. On the other hand, by Lemma \ref{lem:monotonicity-psi-s-any-dim}, if $s_{0}>\xi$, then $\psi_\infty = \tilde\psi_{s_0} > \tilde\psi_\xi$ on $[1,b)$, a contradiction. Hence $s_{0} = \xi$ and this completes the proof of the proposition.
		\end{proof}
	\end{proof}
	
We remark that the arguments of Proposition \ref{prop:conv-cotangent-flow-calabi-psi} can also be generalized for higher dimensions $n\geq 4$ as mentioned in \cite{Met-PhD-thesis} provided the assumption \eqref{eq:comparison and monotonicity for initial function} holds and there exists a unique branch $\tpsi_{\xi}$. Moreover, the higher order estimates can be proved in a similar manner to those for the J-flow (see \cite{SW08} and \cite{DMS24}) using the estimates from \cite{CJY, Tak-TLPF, FYZ}. This completes the proof of Theorem \ref{thm:singular-dhym-P3 blowup-cotangnet flow_Introduction} (note that the part $(1)$ of Theorem \ref{thm:singular-dhym-P3 blowup-cotangnet flow_Introduction} follows from \cite{Chu-Lee-Tak} and \cite{FYZ}).

\vspace*{5mm}
%%%%%%%%%%%%%%%%%%%%%%%%%%%%%%%%%%%%
\section*{Acknowledgements}
The author would like to thank his Ph.D. supervisor Dr. Ved V. Datar for helpful discussions, continuous encouragement and support. The author also would like to thank Prof. Jian Song for many interesting discussions. This work was supported in part by a Ph.D. fellowship from the Indian Institute of Science (IISc), Bengaluru. The author is grateful to The Institute of Mathematical Sciences (IMSc), Chennai where the final draft of this work was prepared. The author also thanks the anonymous referees for their valuable suggestions and remarks which improved the presentation of this article. The author would like to express his gratitude to the Indian Institute of Technology (IIT) Bombay where this article has been revised in the current version.

\vspace*{5mm}
%%%%%%%%%%%%%%%%%%%%%%%%%%%%%%%%%%%%%%%%%%%%%
	
%\bibliographystyle{mrl}
%----------------------------------

%-----------------------------------
	
\end{document}